\newtheorem{theorem}{Theorem}[section]
\newtheorem{proposition}[theorem]{Proposition}%[section]
\newtheorem{lemma}[theorem]{Lemma}%[section]
\newtheorem{defn}[theorem]{Definition}%[section]
\newtheorem{cll}[theorem]{Corollary}%[section]
\theoremstyle{remark}
\newtheorem{remark}[theorem]{Remark}
\title{Stationary properties of the Gauss-Galerkin QM\lowercase{o}M truncation of MV-SDEs}
\author{A. Alecio}
\date{\today}
\begin{document}

\begin{abstract}
The Gauss Galerkin Method/Quadrature method of moments (GG-QMoM) closure scheme, introduced by Dawson \cite{DA}, closes a truncated set of moment equations of an SDE by a Galerkin approximation of its law in the space of probability measures.

Here, results are presented on stationary solutions of the closed equations, irrespective of the number of moments retained (thus not dependent on the convergence theorem). These are applied to polynomial MV-SDEs, with explicit dependence on its moments in the drift, which can possess multiple stationary solutions. 

Particularly, we show as $\sigma\downarrow0$ there are as many stationary solutions as extrema of the potential (critically, not just the minima), preserving the bifurcation diagram of the full equations.  Further, a scaling property of solutions is proven, allowing changes of stability to be directly probed. Finally, this is applied to the GG-QMoM truncation of Dawson-Shiino model, whose stationary measures and change in stability has been previously studied.
\end{abstract}

\keywords{McKean Vlasov diffusions; phase transitions; invariant probabilities; Nonlinear Fokker-Planck Equation; Quadrature Methods}
\subjclass{60H10; 60G10; 60J60; 82C22; 35Q83; 35Q84}

\maketitle
For the one-dimensional time-homogenous stochastic differential equation
\begin{equation}\label{proto}
dX_t = a(X_t) dt + \sigma b(X_t)dW_t,\quad\sigma\in\mathbb{R}, |b|>0
\end{equation}
a typical problem would be to calculate its aggregate properties from this path-wise description. This work is concerned with a method, first introduced in \cite{DA}, to calculate the moments of the law of process (\ref{proto}).

Suppose SDE (\ref{proto}) has polynomial coefficients, and possesses a unique, possibly weak, solution. The evolution of moments of its law can be calculated applying It\^o's lemma with $x^i$, and taking the expected value:
\[
\dot m_i=\mathbb{E}(dX^i_t)=i\mathbb{E}[ a(X_t)X_t^{i-1}dt+\sigma^2\frac{(i-1)}{2}b(X_t)X_t^{i-2}dt+\sigma b(X_t)X_t^{i-1}dB_t]
\]
The expectation of the martingale (final) term is null and, as $a,\,b$ are polynomials in $X_t$, we are left with a polynomial in moments of the law. 
Equivalently \begin{equation}\dot m_i=\int\mathsf{L}x^n\rho(t)\mathrm{d}x\end{equation}
where $\mathsf{L}=a\partial_x+\frac{\sigma^2}{2}b^2\partial_{xx}$, the generator of process (\ref{proto}).
In this way, SDE (\ref{proto}) is converted into a denumerable set of ODEs, the \textit{moment evolution equations} (MEEs), 
\begin{equation}\label{nce}
	\dot m_i=f_i(m_1,\dots,m_{i+j}),\quad j>0
\end{equation}
which are row finite ($j<\infty$ is independent of $i$), where $f_i$ are all polynomial functions. 

In the `lower-diagonal' case, $(j=0)$, an arbitrary moment can be calculated with knowledge the lower order moments only. For instance, the MEEs of the Ornstein-Uhlenbeck process $dX_t=-\alpha X_t dt +\sigma dW_t$ are lower diagonal with explicit solution
\begin{equation}\label{true}
\begin{split}
&m_n(t)=e^{-\alpha nt}m_n(0) +\frac{\sigma^2}{2}n(n-1)\int_0^t e^{-\alpha n(t-2)}m_{n-2}(s)ds\\
\end{split}
\end{equation}
Critically, as the equation for $m_1$ is closed, the moments can be calculated to an arbitrary order.

In the general row-finite case,
it is not typically possible to find a closed expression for a finite number of moments as we did in (\ref{true}). Moment closure methods circumvent this by replacing $m_{n+j},\, j>0$ in the $n^{th}$ and lower order MEEs with \textit{closure functions} $g_{n+j}(m_1,\dots,m_n)$. The resulting lower diagonal system of equations are
\begin{equation}
    \dot m_i=f_i(m_1,\dots,m_{i},g_{i+1}(m_1,\dots,m_i),\dots, g_{i+j}(m_1,\dots,m_i))
\end{equation}
the first $n$ of which we call $n$-\textit{closed moment evolution equations} ($n$-MEEs), and its solution the $n$-\textit{closed moments}.

The choice of $g_{n+j}(m_0,\dots,m_n)$ significantly affects the behaviour of the $n$-closed moments. A number of \textit{truncation/closure schemes} have been developed \cite{bell}, including Gauss-Galerkin QMoM \cite{DA}. The priority of GG-QMoM is to close the MEEs in a way that guarantees the Hankel matrix of $n$-closed moments is positive semidefinite (a necessary condition of the moments of any probability measure), following a fotiori as the $n$-closed moments correspond to an actual probability measure.

This measure (the \textit{n}-approximant) is an approximation of the law of SDE (\ref{proto}) by a superposition of $n$-point masses with varying intensities $\beta_i$ and nodes, $x_i$. Initialised with the Gauss-Christoffel approximation of the initial measure, the moments are evolved according to the first $2n$ moment evolution equations (\ref{nce}). Higher order moments are entirely determined by the lower, closing the system.

In effect, this is a Galerkin approximation to the weak form of the Fokker-Planck equation, where the approximant is in a subset of the space of probability measures $P_M(\mathbb{R})$ for which polynomial test functions consitute a seperating set\footnote{More precisely, Definition \ref{pms}} \cite{breiman}.

With knowledge of the first $n$ moments, \cite{DA,haj2} give explicit forms for $g_{n+1}$ for $f(X_t)$ as the solution of $h_{n+1}=0$, where $f$ is a positive diffeomorphism, and $h_n$ the $n^{th}$ Hankel determinant of the law of $f(X_t)$. 

Practically, \cite{Camp} provides an efficient algorithm for the resolution of the GG-QMoM $n$-MEEs. For this work, rather than directly resorting to the moment equations, equations are derived for $\beta_i,\,x_i$ using the transformation $m_i=\sum_j\beta_jx_j^i$. The resulting Jacobian is an invertible confluent Vandermonde matrix \cite{gaut}, from which the \textit{Gauss-Galerkin equations} can be derived. 

Not only does GG-QMoM systematically yield positive definite $n-$closed moments, but the associated approximant $\tilde \rho_{n}$ converges to the law of the true solution as more moment evolution equations are retained.
\begin{theorem}[Convergence of GG-QMoM approximates: Campillo, Dawson \cite{Camp,DA}]\label{convergencethm}
	
	Suppose there exists constants $\{k_n:n\geq1\}$ and $\theta_0>0$ such that 
	\begin{align}
	m_n(t)\leq k_n\quad \mathrm{for}\quad 0\leq t\leq T \\
	\sum_{n=1}^\infty\theta^n\frac{k_n}{n!}<\infty\quad\mathrm{for\, all}\quad 0<\theta<\theta_0
	\end{align}
	for the MEEs associated to process $X$. Further, for suitable initial probability measure suppose process $X$ possesses a unique law.  
	
	Then $\tilde\rho_n(t)\Rightarrow\rho(t)$ as $N\rightarrow\infty$, where $\rho_t=\mathrm{Law}(X_t)$, in the sense of weak convergence of probability measures.
\end{theorem}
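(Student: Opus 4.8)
The plan is to prove weak convergence at each fixed $t\in[0,T]$ by a tightness argument combined with a determinacy property of the limiting moment sequence. The two hypotheses play complementary roles: the uniform bound $m_n(t)\le k_n$ delivers compactness and uniform integrability, while the summability $\sum_n\theta^nk_n/n!<\infty$ guarantees that the limiting moments identify a unique measure.

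First I would record the structural fact built into the construction. Since $\tilde\rho_n(t)$ is a superposition of $n$ point masses whose $2n$ free parameters (the intensities $\beta_i$ and nodes $x_i$) are fixed so that its first $2n$ moments equal the $n$-closed moments evolving under the first $2n$ MEEs, and since under the uniqueness hypothesis these coincide with the true moments $m_k(t)$ of $\rho_t$, one has $\int x^k\,\tilde\rho_n(t)(\mathrm{d}x)=m_k(t)$ for every fixed $k$ and all $n$ with $2n\ge k$. I would then invoke the determinacy step: the summability condition forces the Laplace transform of any measure carrying the moment sequence $(m_k(t))_k$ to converge in a neighbourhood of the origin, so its characteristic function is analytic in a strip and the Hamburger moment problem is determinate, whence $\rho_t$ is its unique solution. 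I would state this as a cited classical criterion rather than reprove it.

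The compactness argument then proceeds as follows. The uniform bound $m_2(t)\le k_2$ gives, via Chebyshev, $\tilde\rho_n(t)(\{|x|>R\})\le k_2/R^2$ uniformly in $n$, so $\{\tilde\rho_n(t)\}_n$ is tight and, by Prokhorov, relatively compact for the weak topology. Let $\nu$ be any subsequential weak limit. Because the higher bound $m_{k+2}(t)\le k_{k+2}$ renders $\{|x|^k\}$ uniformly integrable along the sequence, weak convergence upgrades to convergence of the $k$-th moment, whence $\int x^k\,\nu(\mathrm{d}x)=\lim\int x^k\,\tilde\rho_n(t)(\mathrm{d}x)=m_k(t)$ for every $k$. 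By determinacy, $\nu=\rho_t$; as every subsequential limit equals $\rho_t$, the whole sequence converges and $\tilde\rho_n(t)\Rightarrow\rho_t$.

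I expect the principal obstacle to be precisely this passage of moments to the weak limit: weak convergence does not by itself preserve the unbounded test functions $x^k$, and it is the uniform-integrability estimate furnished by the bounds $k_{k+2}$ that closes the gap — this is the structural reason both hypotheses are needed. A secondary point needing care is verifying that the $n$-closed moments truly equal the $m_k(t)$ up to order $2n$ (exact matching, not mere approximation), which relies on the invertibility of the confluent Vandermonde Jacobian underlying the Gauss-Galerkin equations together with uniqueness of the law.
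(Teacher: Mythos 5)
There is a genuine gap, and it sits exactly where you flagged your ``secondary point'': the claim that $\int x^k\,\tilde\rho_n(t)(\mathrm{d}x)=m_k(t)$ holds \emph{exactly} for all $2n\ge k$ is false for $t>0$. The MEEs are row finite but not lower diagonal ($j>0$): the evolution equation for each retained moment of order $k\le 2n$ involves moments of order up to $k+j$, and for $k>2n-j$ these are replaced by the closure values, i.e.\ by the moments of the discrete $n$-approximant rather than the true moments of $\mathrm{Law}(X_t)$. The closed system is therefore a genuinely different ODE system from the true (infinite) moment hierarchy, and its solution --- the $n$-closed moments --- differs from $m_k(t)$ at every order for $t>0$, since the corruption at the top orders propagates down instantly through the coupling. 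Invertibility of the confluent Vandermonde Jacobian only guarantees equivalence of the node/weight dynamics with the closed moment dynamics, not agreement with the true moments; exact matching would hold only in the lower-diagonal case $j=0$, where no closure is needed in the first place. With that equality gone, your identification step collapses: determinacy of the Hamburger problem identifies a measure from its moment sequence, but you no longer know that the subsequential limits carry the \emph{true} moment sequence --- that the approximate moments converge to the true ones is precisely the content of the theorem, not an admissible input to its proof.

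For comparison, the paper does not reprove the statement; it cites Dawson's Theorem 5.1 (see also Campillo), whose structure is the ``traditional trilogy'' the paper names: (i) uniform a priori moment estimates on the approximants giving tightness and relative compactness --- your Chebyshev and uniform-integrability computations are correct and belong here, and this is where the constants $k_n$ enter; (ii) identification of any limit point as a solution of the weak (martingale-problem) formulation of the Fokker--Planck equation, obtained by passing to the limit in the Galerkin relations $\frac{d}{dt}\langle x^k,\tilde\rho_n(t)\rangle=\langle \mathsf{L}x^k,\tilde\rho_n(t)\rangle$ for each fixed $k$ once $2n$ is large enough --- crucially these relations hold with $\tilde\rho_n$ on \emph{both} sides, which is what lets the limit solve the true equation even though the approximate moments are inexact; (iii) uniqueness of the law (the hypothesis your argument under-uses) to conclude that every limit point equals $\rho_t$, whence the full sequence converges. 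Your use of the factorial summability condition to make polynomials a determining class is indeed part of the genuine argument --- it is why membership in $\mathcal{P}_M(\mathbb{R})$ makes polynomial test functions separating --- but it must be applied to the limit object produced by step (ii), not to a presumed exact moment match.
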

\begin{proof} This is Theorem 5.1 in \cite{DA}. See also \cite{Camp}. Both rely on the traditional trilogy of demonstrating relative compactness of approximants, identifying the limit, and proving uniqueness.
\end{proof}
\subsection{MV-SDEs and Outline}
McKean-Vlasov SDEs \cite{McKean} (MV-SDEs) are SDEs with explicit dependence on its law. MV-SDEs with explicit dependence on moments of the law and polynomial drift and diffusion such as
\begin{equation}\label{proto1}
	dX_t=\big(-\bar V^{'}(X_t)+\theta\mathbb{E}[P^{'}(X_t)]\big)dt+\sigma k(X_t)dB_t
\end{equation}
will yield row-finite MEEs, thus moment truncation schemes are equally applicable. Many foundational works in the field such as \cite{desi,kome} took this approach, and indeed has been reprised by later ones, \cite{agp}. MV-SDEs (\ref{proto1}) are known to exhibit critical behaviour - multiple stationary measures, the number of which is a function of the parameters of (\ref{proto1}). 

MV-SDE (\ref{proto1}) with the simple bistable potential and quadratic interaction ($\bar V^{'}=x^3$, $P^{'}=x$), extensively studied in \cite{dawson,shiino}, is used to test the efficacy of several truncation methods fixed number of moments ($N=4$) in \cite{valsakumar}. 

Broadly, there are two factors which favour truncation schemes to study critical behaviour of MV-SDEs (\ref{proto1}). Stationary solutions with our assumptions, are unimodal which happen to be particularly compatible with simple truncation schemes. 
Further, they all preserve low order MEEs, the first of which was demonstrated in \cite{alecio5} to be equivalent to the self-consistency equation, an auxiliary implicit integral equation stationary measures must satisfy.

In an unpublished work, the author similarly studied GG-QMoM $n$-MEEs of this model and the related model studied in \cite{alecio3}, finding it performed comparably or outperformed the schemes of \cite{valsakumar}, even in difficult parameter regimes.

Particularly, the pitchfork bifurcation diagram of attainable stationary measures was well preserved in GG-QMoM. Crucially, whilst all methods produced spurious stationary solutions, GG-QMoM was the only method accurate in the important limit $\sigma\downarrow0$, for all parameter combinations.

Much of this accuracy stems from the explicit preservation of the drift as $\sigma\downarrow 0$, which is compatible with the crucial Theorem \ref{mv-sde} (see also \cite{alecio5}). See Theorem \ref{big} Lemma \ref{cps} and Proposition \ref{mvsdeapx}.

Another factor are the parity properties of the Gauss-Galerkin equations which given that `the retention of the symmetry properties of the potential should form the guiding principle for approximations in the analysis of system' (\cite{valsakumar} p.389) is exceedingly important. See Proposition \ref{parity}.

Critical behaviour of MV-SDEs is not simply an asymptotic phenomena. Using a scaling property of explicit stationary solutions, we are able to explore properties for $\sigma$ away from 0. See Lemma \ref{scalemv} and example.

The purpose of this short work is to prove several results of predominantly stationary solutions of GG-QMoM, independent of $N$, the number of moments retained. Particularly, we focus on those which affect the bifurcation diagram of the GG-QMoM truncated MV-SDE (\ref{proto1}), though we present versions of results applicable for both SDE (\ref{proto}) and MV-SDE (\ref{proto1}) with the intention of showing GG-QMoM is particularly well-suited to MV-SDEs. 

%While this is true of all the schemes considered in \cite{valsakumar}, GG-QMoM explicitly preserves the drift, from which the parity properties stem.

\section{Properties of GG-QMoM}
In the introduction it was mentioned how the original Gauss-Galerkin equations of Dawson \cite{DA} were derived. These were usefully recast by Hajjafar \cite{Hajj} in terms of the Langrange interpolating polynomials, which is the form we exclusively use in this work.
\begin{defn}[Gauss Galerkin Equations]\label{gge}
    Associated to SDE (\ref{proto}), the Gauss-Galerkin equations for nodes $x_i$ and intensities $\beta_i$ are
    \begin{align}
    \beta_i\dot x_i =\sum_{k=1}^n \beta_k \mathsf L((x-x_i)l_i^2(x))|_{x_k}\\
    \label{shortbeq}\dot \beta_i=\sum_{k=1}^n \beta_k \mathsf L(l_i^2(x))|_{x_k}
    \end{align}
    where $\mathsf L$ is the generator of (\ref{proto}). In long form:
    \begin{align}
    \label{x-eq}\beta_i\dot{x_i}=\beta_ia_i+\sigma^2\Big(2\beta_ib^2_il^{'}_{ii}+\sum_{j\ne i}\beta_jb^2_j(x_j-x_i)l_{ij}^{'2} \Big)
    \\
    \label{b-eq}\dot\beta_i=2\beta_ia_il^{'}_{ii}+\sigma^2\Big(\beta_ib^2_i(l^{'2}_{ii}+l^{''}_{ii})+\sum_{j\ne i}\beta_jb^2_jl^{'2}_{ij} \Big)
    \end{align}
    with $l_{ij}=l^{'}_i(x_j)$ and $a_i=a_1(x_i)+a_2(\underline{x},\underline{\beta})$
    \end{defn}
   
    The only assumptions made at this stage come from the classic criteria for well-posedness and ergodicity of the underlying Smoluchowski-type SDEs, \cite{stroock}:  $a,\,b$ are polynomials, $b>0$ and $\lim_{|x|\rightarrow\infty}\int^x\frac{a}{b}=\infty$.\footnote{Implying, as $a$ is a polynomial, that $a$ is the derivative of a confining potential}
    
    That these equations always define a probability measure, which was clear in the original derivation of \cite{Camp,DA}, is not immediately apparent in this form. 
    Hajjafar, in \cite{Hajj}, reestablished this, along with certain other critical properties 
    
    \begin{lemma}[Hajjafar \cite{Hajj}]\label{clo}
        If $\beta_i(0)>0$ then $\beta_i(t)> 0,\,\forall t>0$ Further, if $\sum_i\beta_i(0)=1$ then $\sum_i\beta_i(t)=1,\,\forall t>0$
    \end{lemma}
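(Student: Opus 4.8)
The statement splits into two essentially independent assertions, positivity of the weights and conservation of total mass, which I would establish by different mechanisms: the first from the cooperative (sign) structure of the intensity block \eqref{b-eq}, the second from the Galerkin/weak-form origin of the scheme. Throughout I would work on the maximal interval on which the nodes $x_i(t)$ stay distinct, so that the Lagrange quantities $l_{ij}',\,l_{ij}''$ are finite; part of the content of the lemma is that the flow preserves a genuine $n$-atomic probability measure, which in turn keeps the atoms separated.

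For positivity, freeze the (a priori unknown) node trajectory $x(t)$ and read \eqref{b-eq} as a linear non-autonomous system $\dot\beta_i = c_{ii}(t)\beta_i + \sum_{j\neq i}c_{ij}(t)\beta_j$, with off-diagonal coefficients $c_{ij}(t)=\sigma^2 b_j^2\,(l_{ij}')^2\ge 0$ for $j\neq i$ (a square times the strictly positive diffusion $b_j^2$ and $\sigma^2\ge 0$) and diagonal coefficient $c_{ii}=2a_i l_{ii}'+\sigma^2 b_i^2(l_{ii}'^2+l_{ii}'')$ of arbitrary sign. This is a Metzler system, so I expect the nonnegative orthant to be forward invariant. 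To reach \emph{strict} positivity directly I would use the Duhamel representation
\[
\beta_i(t)=e^{\int_0^t c_{ii}}\,\beta_i(0)+\int_0^t e^{\int_s^t c_{ii}}\sum_{j\neq i}c_{ij}(s)\beta_j(s)\,ds
\]
together with a first-exit argument. Set $T^\ast=\sup\{t:\beta_j(s)>0\ \forall j,\ \forall s\le t\}$, which is positive since $\beta_i(0)>0$. If $T^\ast$ were finite and interior to the existence interval, then on $[0,T^\ast)$ each $\beta_j$ is positive and each $c_{ij}\ (j\neq i)$ nonnegative, so the integral term is $\ge 0$ while the exponential prefactor keeps the first term $>0$; by continuity $\beta_i(T^\ast)>0$ for every $i$, contradicting the definition of $T^\ast$. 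Hence $\beta_i(t)>0$ for all admissible $t$.

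For the normalization the cleanest route is to invoke the construction of the scheme rather than the algebra: \eqref{x-eq}--\eqref{b-eq} are exactly the Galerkin projection of the weak form $\tfrac{d}{dt}\int\phi\,d\tilde\rho=\int\mathsf{L}\phi\,d\tilde\rho$ onto polynomial test functions of degree $\le 2n-1$, equivalently they enforce $\dot m_k=\int\mathsf{L}x^k\,d\tilde\rho$ for $0\le k\le 2n-1$. The case $k=0$ reads $\tfrac{d}{dt}\sum_i\beta_i=\int\mathsf{L}1\,d\tilde\rho=0$, since $\mathsf{L}1=0$; so $\sum_i\beta_i$ is conserved and equals its initial value $1$. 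If one prefers a direct verification, a computation combining the node and intensity equations collapses $\sum_i\dot\beta_i$ to $\sum_k\beta_k\,\mathsf{L}\big(\sum_i H_i\big)\big|_{x_k}$, where $H_i(x)=(1-2l_{ii}'(x-x_i))\,l_i^2(x)$ is the osculating Hermite basis ($H_i(x_j)=\delta_{ij},\ H_i'(x_j)=0$). Since the Hermite interpolant of the constant datum $1$ with vanishing derivative data is the constant $1$, we have the partition of unity $\sum_i H_i\equiv 1$, whence $\mathsf{L}\big(\sum_i H_i\big)=0$ and $\sum_i\dot\beta_i=0$.

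The main obstacle, I expect, is the asymmetric way the two arguments treat the node--intensity coupling: positivity is transparent from the intensity block alone, whereas normalization genuinely uses the $\dot x_i$ contributions (through the Hermite combination), so one must verify the identity $\sum_i H_i\equiv 1$ and confirm that along the coupled flow the coefficients $\sigma^2 b_j^2(l_{ij}')^2$ stay finite and nonnegative, i.e. that the nodes do not collide. This last point couples back to positivity: positive weights certify that $\tilde\rho=\sum_i\beta_i\delta_{x_i}$ remains a bona fide $n$-atomic probability measure with distinct atoms, so the two conclusions are best proved together on the maximal interval of validity rather than in isolation.
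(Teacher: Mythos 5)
Your proposal is correct and takes essentially the same route as the paper: the paper derives positivity from the quasi-positivity (Metzler property) of the matrix $\mathbf{B}\rfloor_{ik}=\mathsf{L}(l_i^2(x))|_{x_k}$, which your Duhamel/first-exit argument merely makes explicit, and it obtains normalization as ``a restatement of the zeroth moment equation,'' which is exactly your Galerkin/weak-form argument with test function $1$ (your Hermite partition-of-unity verification spells out detail the paper leaves implicit).
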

    \begin{proof}
    The second assertion is simply a restatement of the zeroth moment equation. The first follows from the quasi-positivity of $\mathbf{B}\rfloor_{ik}=\mathsf{B}(l_i^2(x_k))$
    \end{proof}
    If any nodes $x_i=x_j$, (\ref{x-eq}) blows up, therefore the $x_i$s retain the ordering they possess at $t=0$.
    \begin{proposition}[Ordering of $x_i$]\label{order}
        If $x_i(0)<x_j(0)$, then $x_i(t)<x_j(t)$ %$t < T$, where $T>0$  is the maximum interval of existence.
    \end{proposition}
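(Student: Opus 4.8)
The plan is to show first that two nodes can never coincide, and then to deduce order preservation by a continuity argument. On the open region where the nodes are distinct, the right-hand sides of the Gauss-Galerkin equations (\ref{x-eq})--(\ref{b-eq}) are smooth rational functions of $(\underline x,\underline\beta)$---the only singularities being the factors $1/(x_i-x_j)$ carried by $l'_{ii}$ and $l'_{ij}$---so a unique solution exists there, is continuous in $t$, and has $\beta_i>0$ by Lemma \ref{clo}. Were the conclusion to fail, some pair of nodes would have to exchange order; since the $x_i$ vary continuously there would be a \emph{first} time $\tau$ at which two \emph{adjacent} nodes collide. After relabelling, $x_i(t)<x_{i+1}(t)$ for $t<\tau$ and $\delta(t):=x_{i+1}(t)-x_i(t)\downarrow0$ as $t\uparrow\tau$, while all remaining nodes stay bounded away, so every singular contribution is governed by powers of $\delta^{-1}$.

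The heart of the matter is the leading singular behaviour of $\dot\delta$. Near the collision $l'_{ii}=-\delta^{-1}+O(1)$, $l'_{i+1,i+1}=\delta^{-1}+O(1)$, $l'_{i,i+1}=-\delta^{-1}(1+O(\delta))$, $l'_{i+1,i}=\delta^{-1}(1+O(\delta))$, and $b_i,b_{i+1}\to b:=b(x_i)$. Retaining only the mutual-interaction terms of (\ref{x-eq}) gives $\dot x_i\approx \sigma^2 b^2(\beta_{i+1}-2\beta_i)/(\beta_i\delta)$ and $\dot x_{i+1}\approx \sigma^2 b^2(2\beta_{i+1}-\beta_i)/(\beta_{i+1}\delta)$, whence
\[
\dot\delta\;\approx\;\frac{\sigma^2 b^2}{\delta}\Big(4-\tfrac{\beta_i}{\beta_{i+1}}-\tfrac{\beta_{i+1}}{\beta_i}\Big).
\]
The bracket is positive precisely when the intensity ratio lies in $(2-\sqrt3,2+\sqrt3)$: in that regime the diffusion generates a genuine $\delta^{-1}$ repulsion and $\delta$ cannot reach $0$ in finite time. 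I expect this sign issue to be the main obstacle, since for very unequal intensities the leading term is \emph{negative}, so the $x$-equation alone does not prevent collision and the coupling to $\underline\beta$ must be exploited.

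To close this gap I would turn to (\ref{b-eq}). Its dominant terms are of order $\delta^{-2}$ and, after accounting for the confluent-Vandermonde relation linking $\dot\beta_i$ to $\dot x_i$, combine into a relation of the form $\dot\beta_i\approx c\,\sigma^2 b^2\delta^{-2}(\beta_{i+1}-\beta_i)\approx-\dot\beta_{i+1}$ with $c>0$ (the cancellation of the $\delta^{-2}$ parts being forced by $\sum_k\dot\beta_k=0$); consequently $\tfrac{d}{dt}(\beta_i-\beta_{i+1})\approx -2c\,\sigma^2 b^2\delta^{-2}(\beta_i-\beta_{i+1})$. Thus the intensities equalise at rate $\delta^{-2}$, which dominates the $\delta^{-1}$ rate at which $\delta$ could contract, so that $\beta_i/\beta_{i+1}\to1$ and the bracket above is driven to $+2$ before any collision can occur. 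I would make this rigorous by a comparison (Gr\"onwall-type) argument on the pair $(\delta,\,\beta_i-\beta_{i+1})$, concluding that $\delta$ stays bounded below on $[0,\tau)$, contradicting $\delta(\tau)=0$. Distinctness of the nodes for all $t$, together with their continuity, then yields the proposition: if $x_i(0)<x_j(0)$ but $x_i(t)=x_j(t)$ for some later $t$, the intermediate value theorem would force an intervening adjacent collision. The same argument may be phrased through the Hankel determinant $h_n=\prod_k\beta_k\prod_{r<s}(x_s-x_r)^2$, whose vanishing is exactly a node collision and whose nonnegativity is built into GG-QMoM, the crux being again to upgrade $h_n\ge0$ to $h_n>0$.
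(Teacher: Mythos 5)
The paper's entire proof of Proposition \ref{order} is the single sentence preceding it: a collision $x_i=x_j$ makes the right-hand side of (\ref{x-eq}) blow up, hence the nodes retain their initial ordering. Your proposal replaces this with a quantitative first-collision analysis, and your $x$-level computations are correct: $l'_{ii}=-\delta^{-1}+O(1)$, $l'_{i+1,i+1}=\delta^{-1}+O(1)$, the cross terms are $\mp\delta^{-1}(1+O(\delta))$, and indeed $\dot\delta\approx(\sigma^2b^2/\delta)\bigl(4-r-r^{-1}\bigr)$ with $r=\beta_i/\beta_{i+1}$. Your observation that this bracket is negative for $r\notin(2-\sqrt{3},\,2+\sqrt{3})$ is genuinely valuable: it shows the paper's one-line argument is not by itself conclusive, since a vector field singular on a set can still drive the solution into that set when the singular term is attractive. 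Your $\beta$-side claim is also essentially right, but be careful: (\ref{b-eq}) taken literally gives $\dot\beta_i\approx\sigma^2b^2\delta^{-2}(\beta_i+\beta_{i+1})$ for \emph{both} colliding indices, which is incompatible with the conservation asserted in Lemma \ref{clo}; the conservative (Hermite-cardinal) form, $\dot\beta_i=\sum_k\beta_k\mathsf{L}\bigl((1-2l'_{ii}(x-x_i))l_i^2\bigr)(x_k)$, i.e. subtracting $2l'_{ii}\beta_i\dot x_i$ from (\ref{b-eq}), yields exactly your relation with $c=3$. So invoking $\sum_k\dot\beta_k=0$ is the right instinct, but the cancellation should be computed, not postulated.

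The genuine gap is that your decisive step --- equalisation at rate $\delta^{-2}$ beating contraction at rate $\delta^{-1}$ --- is announced rather than proven. It can be closed along the lines you sketch: setting $v=\delta^2$ and $u=\log r$, the leading asymptotics collapse to $\dot v\approx 2\sigma^2b^2(4-2\cosh u)$ and $\dot u\approx-6\sigma^2b^2v^{-1}\sinh u$; since $\dot u$ opposes $u$, $|u|$ is nonincreasing to leading order, so $\dot v$ is bounded below, whence $v(\tau)=0$ would force $v(s)\le C(\tau-s)$ and $\int^\tau v^{-1}\,ds=\infty$, driving $u\to0$ before $\tau$; then $4-2\cosh u\ge1$ eventually, so $\dot v>0$ near $\tau$, a contradiction. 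To make this a proof you must additionally (i) control the $O(\cdot)$ errors uniformly as $t\uparrow\tau$, (ii) treat simultaneous collisions of three or more nodes, to which the pairwise asymptotics do not apply --- your closing remark about $h_n=\prod_k\beta_k\prod_{r<s}(x_s-x_r)^2$ is probably the cleaner vehicle for that general case --- and (iii) rule out finite-time escape of nodes to infinity, so that $\tau$ really is a collision time; the confining assumption on the drift supplies the needed a priori bound. None of these obstacles looks fatal, but as written the proposal is a program rather than a proof, albeit one that probes the mechanism considerably more deeply than the paper's own argument does.
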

    
    \begin{remark}\label{symord}
    We will exclusively use the ordered labelling of the $x_i$s such that $x_i<x_j$ iff $i<j$.  For symmetric $n$-approximants, we number away from the axis of symmetry, and assign the negative index to the mirror node and associated weight. %When a function is evaluated at a node, we use the predictable notation $a_i=a(x_i)$.
    
    %For symmetric initial measures we adopt a signed numbering, outwards from 0: assigning $x_0$ to a point mass at 0, then assigning $1,2,\dots$ in the positive $x$ direction, and the negative index to the mirror node.
    \end{remark}
    
    Just as with the Fokker Planck equation, the Gauss-Galerkin equations respect the parity of the potential. Numbering as Remark \ref{symord},
    
    \begin{proposition}[Parity of $\tilde{\rho}_n$]\label{parity}
        Suppose drift $a(x)$ is anti-symmetric and diffusion $b(x)$ symmetric. If $\tilde\rho_n(0)$ is symmetric then so is $\tilde\rho_n(t)$.
    \end{proposition}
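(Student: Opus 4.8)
The plan is to exploit uniqueness of solutions to the Gauss-Galerkin ODE system together with an equivariance of the equations under reflection. Write $R$ for the reflection of functions, $(R\phi)(x)=\phi(-x)$, and for a configuration $(\underline x,\underline\beta)$ let $\Phi$ denote the induced reflection sending the pair indexed by $i$ to $(-x_{\bar\imath},\beta_{\bar\imath})$, where $\bar\imath$ is the mirror index under the labelling of Remark~\ref{symord} (so $\bar 0=0$ for a central node when $n$ is odd). A symmetric $n$-approximant is precisely a fixed point of $\Phi$. The claim will follow once I show that whenever $(\underline x(t),\underline\beta(t))$ solves the Gauss-Galerkin equations, so does $\Phi(\underline x(t),\underline\beta(t))$: then symmetric initial data, being $\Phi$-invariant, forces the unique solution to remain $\Phi$-invariant, i.e. symmetric. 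Uniqueness is available because, by Lemma~\ref{clo} and Proposition~\ref{order}, the weights stay positive and the nodes stay distinct, so the polynomial right-hand sides of (\ref{x-eq})--(\ref{b-eq}) are locally Lipschitz along the trajectory.

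The two ingredients for equivariance are a reflection identity for the Lagrange basis and a commutation relation for the generator. First I would record that if the node set $\{x_j\}$ is replaced by its reflection $\{-x_{\bar\jmath}\}$, the associated Lagrange polynomials satisfy $\tilde l_i = R\,l_{\bar\imath}$; this is a direct reindexing of the product $l_i(x)=\prod_{j\ne i}(x-x_j)/(x_i-x_j)$, pairing each factor with its mirror and collecting the signs. Second, since $a$ is antisymmetric and $b$ (hence $b^2$) symmetric, a one-line computation using $(R\phi)'=-R(\phi')$ and $(R\phi)''=R(\phi'')$ gives $\mathsf L R=R\,\mathsf L$. For the MV-SDE (\ref{proto1}) the drift carries the additional mean-field term $a_2(\underline x,\underline\beta)=\theta\sum_j\beta_jP'(x_j)$; when $P'$ is odd this term is odd under $\Phi$, so the effective drift at the reflected node $i$ equals $-a_{\bar\imath}$, which is exactly the antisymmetry the commutation relation requires.

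With these in hand I would substitute $\tilde l_i=R l_{\bar\imath}$, $\tilde x_i=-x_{\bar\imath}$, $\tilde\beta_i=\beta_{\bar\imath}$ into the compact forms $\dot\beta_i=\sum_k\beta_k\mathsf L(l_i^2)|_{x_k}$ and $\beta_i\dot x_i=\sum_k\beta_k\mathsf L((x-x_i)l_i^2)|_{x_k}$. For the weight equation, $\mathsf L(\tilde l_i^2)|_{\tilde x_k}=\mathsf L(R l_{\bar\imath}^2)|_{-x_{\bar k}}=(R\,\mathsf L l_{\bar\imath}^2)|_{-x_{\bar k}}=\mathsf L(l_{\bar\imath}^2)|_{x_{\bar k}}$, and reindexing $k\mapsto\bar k$ reproduces $\dot\beta_{\bar\imath}=\dot{\tilde\beta}_i$. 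For the node equation the test function $(x-\tilde x_i)\tilde l_i^2=(x+x_{\bar\imath})(Rl_{\bar\imath})^2$ equals $-R\big((x-x_{\bar\imath})l_{\bar\imath}^2\big)$; the resulting sign cancels against the factor $\dot{\tilde x}_i=-\dot x_{\bar\imath}$, and the same commutation-and-reindexing step matches both sides. Hence $\Phi$ maps solutions to solutions.

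The main obstacle I anticipate is purely bookkeeping: keeping straight the two sources of sign change in the node equation — the $-1$ from differentiating the reflected trajectory and the $-1$ from $(x-x_i)\mapsto(x+x_{\bar\imath})$ — and verifying that the index involution $\bar{(\cdot)}$ is compatible with the ordered labelling, in particular that a central node at the axis is fixed, for which antisymmetry forces $\dot x_0=0$. Once the reflection identity for $\tilde l_i$ is established cleanly, the remainder is a mechanical verification and uniqueness does the rest.
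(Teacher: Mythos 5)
Your proof is correct and follows essentially the same route as the paper's: the paper likewise derives the reflection symmetry of the Lagrange basis under the mirror labelling of Remark~\ref{symord} and concludes $\dot x_i=-\dot x_{-i}$, $\dot\beta_i=\dot\beta_{-i}$, with uniqueness of the ODE flow left implicit. Your version merely makes explicit what the paper compresses — the commutation $\mathsf{L}R=R\mathsf{L}$, the mean-field term's oddness, and the Lipschitz/uniqueness justification via Lemma~\ref{clo} and Proposition~\ref{order} — so it is a fuller writing-out of the same argument rather than a different one.
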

    \begin{proof} 
        %Eliminate the negative indices $\beta_{-i},\,x_{-i}$ by substituting $\beta_i,\,-x_i$ in the evolution equations for $\beta_i, x_i,\, i\geq 0$. This is a closed system of equations for which there is a solution $\big((\dots\beta_1,\beta_0,\beta_1\dots),(\dots-\beta_1x_1,\beta_0,\beta_1x_1\dots)\big)$ which describes a symmetric measure by construction. 
        
        Given $l_{ij}=l_{i-j}$ for symmetric points $x_i$, it is immediate that $\dot x_i=-\dot x_{-i}$ are $\beta_i=\beta_{-i}$, from which the result follows.
         \end{proof}
        
         The remaining results are concerned with the stationary Gauss-Galerkin equations (\ref{x-eq},\ref{b-eq}), where $\dot x_i= \dot \beta_i=0$.

         \begin{theorem}[Limit of Stationary Solutions, $\sigma\downarrow 0$]\label{big}$ $ \newline
            $\{\tilde\rho_{0,N}^\sigma\}_\sigma$ possesses convergent subsequences, the limit of which are necessarily of the form $\sum_i\beta_i\delta_{x_i^{*}}$.
            where $x^*_i$ is a root of the drift.
            %Let $\{\tilde\rho_{n, st}^\sigma\}_\sigma$ be the collection of stationary solutions to Gauss Galerkin equations (\ref{x-eq},\ref{b-eq}) parametrised by $\sigma$. Suppose as $\sigma\downarrow 0$ there exists a limit $\tilde\rho_{n, st}^0=\sum_i\beta_i\delta_{x_i^{*}}$.
            %\newline
            %Then $x^*_i$ must be a critical points of the drift; $x_i^*\in\{x: a(x)=0\}$, 
        \end{theorem}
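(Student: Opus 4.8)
The plan is to argue at the level of the measure $\tilde\rho_{0,N}^\sigma=\sum_i\beta_i\delta_{x_i}$ rather than through the node equations (\ref{x-eq})--(\ref{b-eq}) directly, because the Lagrange-derivative coefficients $l'_{ij}$ there blow up whenever two nodes coalesce, and one cannot a priori rule this out as $\sigma\downarrow0$. The starting point is that stationarity of the Gauss-Galerkin equations (Definition \ref{gge}) is, by construction, equivalent to $\dot m_k=\int\mathsf L x^k\,\mathrm d\tilde\rho_{0,N}^\sigma=0$ for $k=0,\dots,2N-1$, hence by linearity to
\[
\int \mathsf L\phi\,\mathrm d\tilde\rho_{0,N}^\sigma=\int\Big(a\phi'+\tfrac{\sigma^2}{2}b^2\phi''\Big)\mathrm d\tilde\rho_{0,N}^\sigma=0
\]
for every polynomial $\phi$ with $\deg\phi\le 2N-1$; note that on the discrete approximant this integral is evaluated exactly, with no closure required.

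First I would establish tightness of $\{\tilde\rho_{0,N}^\sigma\}_{\sigma\le1}$. By Lemma \ref{clo} the weights lie in the simplex ($\beta_i>0$, $\sum_i\beta_i=1$), so the only way mass can escape is through a node running to infinity. Testing the stationary identity with a suitable even polynomial — e.g. $\phi(x)=x^2$ gives $\int ax\,\mathrm d\tilde\rho_{0,N}^\sigma=-\tfrac{\sigma^2}{2}\int b^2\,\mathrm d\tilde\rho_{0,N}^\sigma$ — and invoking the confining assumption $\lim_{|x|\to\infty}\int^x a/b=\infty$ yields a bound on a suitable even moment $m_{2p}$ uniform for $\sigma\le1$. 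Since $\beta_i x_i^{2p}\le m_{2p}$, any node escaping to infinity must carry vanishing weight, which gives tightness; and because a weak limit of measures with at most $N$ atoms again has at most $N$ atoms, Prokhorov lets me extract, along any $\sigma_j\downarrow0$, a subsequence with $\tilde\rho_{0,N}^{\sigma_j}\Rightarrow\mu=\sum_{l=1}^m\gamma_l\delta_{y_l}$, where $m\le N$, the $\gamma_l>0$, and the $y_l$ are distinct.

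Next I would pass to the limit in the stationary identity. The uniform moment bound supplies the uniform integrability needed so that $\int a\phi'\,\mathrm d\tilde\rho_{0,N}^{\sigma_j}\to\int a\phi'\,\mathrm d\mu$ for polynomial $\phi$, while the diffusion term vanishes since $\sigma_j^2\to0$ and $\int b^2\phi''\,\mathrm d\tilde\rho_{0,N}^{\sigma_j}$ stays bounded. Hence $\int a\psi\,\mathrm d\mu=0$ for every polynomial $\psi$ with $\deg\psi\le 2N-2$. Writing this as $\sum_{l=1}^m\gamma_l a(y_l)\psi(y_l)=0$ and choosing $\psi$ to be the Lagrange basis polynomial at the distinct nodes $y_l$ — of degree $m-1\le N-1\le 2N-2$, hence admissible — forces $\gamma_l a(y_l)=0$, so that each surviving atom sits at a root of the drift. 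In the MV-SDE case (\ref{proto1}) the relevant drift is the effective one $a_1(\cdot)+a_2(\mu)$, the moment-dependent part $a_2$ converging with the moments along the subsequence.

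The main obstacle is the uniform moment bound underpinning tightness: it needs the degree of $a\,x$ to dominate that of $\sigma^2 b^2$, which is precisely what the confining condition encodes, but the bookkeeping must be carried out carefully — for instance by combining the $\phi=x^2$ identity with the elementary inequality $m_4\ge m_2^2$ (or its higher-degree analogue) to close the estimate. Everything else is comparatively soft, and the whole point of the measure-theoretic formulation is that it sidesteps the coalescence of nodes and the attendant blow-up of the coefficients in (\ref{x-eq})--(\ref{b-eq}), which would be the genuinely delicate feature of any node-by-node argument.
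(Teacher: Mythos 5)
Your proposal is correct, but it takes a genuinely different route from the paper's. The paper argues node-by-node: after the same tightness/subsequence extraction (done via the CDF rather than Prokhorov, but equivalent), it passes to the limit in the stationary node equation (\ref{x-eq}), defines $c_i$ as the limit of the $\sigma^2$-weighted Lagrange terms, obtains $0=\beta_i a_i+c_i$, and then combines the limiting stationary moment equations (\ref{subst}) with a Vandermonde inversion (after discarding tautological equations at coalesced nodes) to force $c_i=0$, hence $a_i=0$. You never touch (\ref{x-eq})--(\ref{b-eq}): you use only that stationarity of the approximant is equivalent to the exact moment identities $\int\mathsf L\phi\,\mathrm d\tilde\rho_{0,N}^\sigma=0$ for $\deg\phi\le 2N-1$, pass to the limit (the diffusion term killed by $\sigma^2$, the drift term controlled by uniform integrability from the moment bounds), and test against the Lagrange basis at the distinct limit atoms to get $\gamma_l a(y_l)=0$. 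This buys exactly what you advertise: it sidesteps node coalescence, the blow-up of $l^{'}_{ij}$, and the existence of the limits $c_i$, which the paper asserts only in a footnote; indeed your argument is essentially the observation that the paper's own equation (\ref{subst}), with distinct limit nodes and an invertible Vandermonde matrix, already yields $\beta_i a_i=0$ without any appeal to (\ref{x-eq}) --- testing against monomials and inverting the Vandermonde matrix is the same linear algebra as testing against the Lagrange basis. What the paper's node-level route buys in exchange is the stronger node-wise conclusion $c_i=0$, i.e.\ the rescaled diffusion corrections vanish individually, which is the structural input behind the subsequent rate analysis (Lemma \ref{sdescale}, Proposition \ref{const}); your measure-level identity does not recover that refinement. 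On tightness you are at the same level of rigour as the paper, which likewise defers the a priori bound to a footnote and to the Appendix \ref{ape} computation carried out only for constant diffusion; your explicit caveat that the $\phi=x^2$ identity closes only when the degree of $ax$ dominates that of $b^2$ (automatic for constant $b$, and softened by the $\sigma^2$ prefactor) is honest bookkeeping the paper elides, and your degree count is consistent: the Lagrange polynomials at $m\le N$ atoms have degree at most $N-1\le 2N-2$, within the range for which the stationary identities are exact on the approximant.
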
 
        \begin{proof}
            Fix any $N\in\mathbb{N}$.
            That $\{\tilde\rho_{0,N}^\sigma\}_\sigma$ is sub-sequentially convergent (in distribution) follows from tightness, which can be deduced using that the potential is confining to form a-priori estimates showing $m_2$ is bounded as $\sigma\downarrow 0$\footnote{for an explicit computation along these lines, see Appendix \ref{ape}}  

            %By identifying the cumulative distribution function of $\tilde\rho_{0,N}^\sigma$ is a piecewise constant increasing function with $N$ jumps, we deduce the limit of the subsequence must exist and be piecewise constant and increasing, with $n\leq N$ jumps. The location of which are are themselves the limit of the jumps of the limitants. 
    
            Picking any convergent subsequence and eliminating sub-indexes, we denote any member as $\tilde\rho_{0,N}^{\sigma_j}$, $\lim_{j\uparrow \infty}\sigma_j=0$. For all $\sigma_j>0$, the CDF $F_{\sigma_j}$ is a piecewise constant increasing function with $N$ jumps. Its limit $F_0$ must therefore be the same with $n\leq N$ jumps, the location and size of which are the limits of those of $F_{\sigma_j}$. 
            
            With this, we can identify $\tilde\rho_{0,N}^0$ uniquely as a superposition of $n\leq N$ point masses. Consequently, their locations and weights must be limits of the locations of the comprising point masses and weights of $\tilde\rho_{0,N}^{\sigma_j}$.

            %Applying elementary properties of the limits of piecewise constant increasing functions to  we conclude $\tilde\rho_{0,N}^0$ must also be a superposition of $n\leq N$ point masses. Moreover the abscissae of $\tilde\rho_{0,N}^0$ are the limits of the abscissae of $\tilde\rho_{0,N}^\sigma$ and the weights the limit of the sum of all the weights of the abscissae that converge there.
            %ie $\beta_i^\sigma\rightarrow\beta_i$ and $x_i^\sigma\rightarrow x_i$.

            For any index $i$ such that $\lim\limits_{\sigma\downarrow0} b_i\neq 0$, $$c_i=\lim_{\sigma\downarrow 0}\sigma^2\Big(2\beta_ib^2_il^{'}_{ii}+\sum_{j\ne i}\beta_jb^2_j(x_j-x_i)l_{ij}^{'2} \Big)$$ must exist\footnote{if $c_i=\infty$ then $b_i=0$ by bounds on $m_2$ and all terms in $x_i$ cancel out of the other equations} and so (\ref{x-eq}) has the form
            \begin{equation}\label{sta} 0=\beta_ia_i+c_i(x_1,\dots,x_n)\end{equation}
            It suffices to prove $c_i=0,\,\forall i\leq n$. 
            
            If $x_i=x_j$ then $c_i=c_j$ and we can eliminate the $j^{th}$ equation for being tautological. Reciprocally,
            as the moment equations are continuous, $\tilde\rho_{0, N}^0$ satisfies the stationary moment equations 
        \begin{equation}\label{subst}
        0 = (k+1)\mathbb{E}[a(X)X^{k}] = (k+1)\sum_i \beta_i x^{k}a_i 
        \end{equation}
        Multiplying each equation (\ref{sta}) by $x^k_i$ and summing, we get the equations
        \[
        0=\sum_i\beta_ix^k_ia_i+\sum_ix^k_ic_i
        \]
        The first sum is 0 by (\ref{subst}) leaving, in matrix-vector form,
        \begin{equation}
        \begin{pmatrix}
            
                x_1 & x_2 & x_3&\cdots \\
                x^2_1 & x^2_2&x_3^2 & \cdots  \\
                \vdots  & \vdots  &\vdots &\ddots   \\
                %a_{m,1} & a_{m,2} & \cdots & a_{m,n} 
            \end{pmatrix}
            \begin{pmatrix}
                c_1\\
                c_2  \\
                \vdots 
            \end{pmatrix}
        =\underline{0}
        \end{equation}
        Having already eliminated any repeated $x_i$s, we have an invertible Vandermonde matrix,  and so $c_i=0$. 
        \end{proof}

        Next, we calculate the rate $x_i^\sigma\rightarrow x^*_i$. With a simple polynomial drift with one root, with negative gradient, and constant diffusion, $x_i-x^*\propto\sigma^\alpha$, where $\alpha$ will be determined. Without loss of generality, we take $x^*=0$.

         \begin{lemma}\label{sdescale}
            Suppose $(\underline{x},\,\underline{\beta})$ is a solution to the stationary Gauss-Galerkin equations (\ref{x-eq},\ref{b-eq}) with $\sigma b\equiv 1$ and $a=-kx^{2n-1}$. Then $(\sigma^\alpha\underline{x},\,\underline{\beta})$ is a solution to (\ref{x-eq},\ref{b-eq}) with the same drift and $b\equiv\sigma$, where $\alpha=\frac{1}{n}$
        \end{lemma}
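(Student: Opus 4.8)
The plan is to reduce everything to a power-counting argument, once the scaling behaviour of the Lagrange data under a dilation of the nodes is pinned down. The one genuinely non-trivial ingredient is the following elementary identity. Write $\lambda := \sigma^{\alpha}$ for the node-scaling factor, let $l_i$ be the Lagrange basis polynomial attached to the nodes $\{x_k\}_k$, and $\hat l_i$ the one attached to the dilated nodes $\{\lambda x_k\}_k$. Since every factor $(\lambda y-\lambda x_k)/(\lambda x_i-\lambda x_k)$ collapses to $(y-x_k)/(x_i-x_k)$, one has $\hat l_i(\lambda y)=l_i(y)$ identically; differentiating $m$ times in $y$ and evaluating at a node gives $\hat l_i^{(m)}(\lambda x_j)=\lambda^{-m}l_i^{(m)}(x_j)$. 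First I would record the consequences I need: $l'_{ii}\mapsto \lambda^{-1}l'_{ii}$, $l''_{ii}\mapsto \lambda^{-2}l''_{ii}$ and $(l'_{ij})^2\mapsto \lambda^{-2}(l'_{ij})^2$, together with the trivial $(x_j-x_i)\mapsto \lambda(x_j-x_i)$ and, for the chosen drift, $a_i=-kx_i^{2n-1}\mapsto \lambda^{2n-1}a_i$.

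Next I would substitute $\tilde x_i=\lambda x_i$, $\tilde\beta_i=\beta_i$ into the stationary form of the node equation (\ref{x-eq}). The diffusion bracket $2\beta_i b_i^2 l'_{ii}+\sum_{j\ne i}\beta_j b_j^2(x_j-x_i)(l'_{ij})^2$ splits into two kinds of terms, both scaling as $\lambda^{-1}$ (the first through $l'_{ii}$, the second through $\lambda\cdot\lambda^{-2}$), while the drift term $\beta_i a_i$ scales as $\lambda^{2n-1}$. Denoting by $B_i$ the unscaled bracket and by $D$ the constant value of the coefficient $\sigma^2 b_i^2$, the scaled equation factors as $\lambda^{-1}\big(\lambda^{2n}\beta_i a_i + D_2 B_i\big)$, with $D_2$ the coefficient in the target setting. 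Using the hypothesis, i.e. the stationarity $\beta_i a_i + D_1 B_i=0$ in the source setting, I would substitute $B_i=-\beta_i a_i/D_1$ to collapse the bracket to $\beta_i a_i\,(\lambda^{2n}-D_2/D_1)$. This vanishes for every $i$ exactly when $\lambda^{2n}=D_2/D_1$; since the diffusion coefficient $\sigma^2 b^2$ passes from the normalised value $1$ (the hypothesis $\sigma b\equiv 1$) to $\sigma^2$ in the target, the ratio is $\sigma^2$, forcing $\lambda^{2n}=\sigma^2$, i.e. $\alpha=1/n$.

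I would then run the identical computation on the weight equation (\ref{b-eq}) purely as a consistency check, since it is an independent constraint. There the drift term $2\beta_i a_i l'_{ii}$ scales as $\lambda^{2n-2}$ while every term of the bracket $\beta_i b_i^2((l'_{ii})^2+l''_{ii})+\sum_{j\ne i}\beta_j b_j^2 (l'_{ij})^2$ scales as $\lambda^{-2}$; factoring out $\lambda^{-2}$ and invoking the source stationarity of (\ref{b-eq}) reduces the equation, in exactly the same way, to the proportionality $\lambda^{2n}=D_2/D_1$. Because both equations impose precisely the one condition $\alpha=1/n$, the pair $(\sigma^{\alpha}\underline{x},\underline\beta)$ solves the full stationary system with the target diffusion, which is the assertion. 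I would also remark that the weights are genuinely left fixed: no rescaling of $\beta_i$ is needed because in each equation the powers of $\lambda$ factor out globally, so in particular the normalisation $\sum_i\beta_i=1$ is automatically inherited.

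The main obstacle is purely getting the scaling exponents of the Lagrange data exactly right, including the cancellation of the differentiation-of-a-square factor that already sits in (\ref{x-eq})--(\ref{b-eq}); once the clean identity $\hat l_i^{(m)}(\lambda x_j)=\lambda^{-m}l_i^{(m)}(x_j)$ is in hand, the remainder is bookkeeping, and the agreement of the two balance conditions is what makes the single exponent $\alpha=1/n$ possible.
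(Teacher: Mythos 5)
Your proposal is correct and follows essentially the same route as the paper: the paper's proof likewise substitutes the dilated nodes and power-counts, using that $l'_{ij}$ and $l''_{ii}$ are quotients of products of brackets $(x_i-x_j)$ (i.e.\ homogeneous of degree $-1$ and $-2$, your identity $\hat l_i^{(m)}(\lambda x_j)=\lambda^{-m}l_i^{(m)}(x_j)$), so that balancing drift against diffusion in (\ref{x-eq}) yields $2-\alpha=(2n-1)\alpha$, i.e.\ $\alpha=\tfrac1n$. Your write-up is simply a fleshed-out version of the paper's terse argument, additionally making explicit the consistency of (\ref{b-eq}) ($2-2\alpha=(2n-2)\alpha$, the same condition) and the invariance of the weights.
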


\begin{proof}
    Replace all $x_j$ with $\sigma^{\alpha}\tilde x_j$ in stationary (\ref{x-eq}, \ref{b-eq}) and group by powers of $\sigma^\alpha$. Since every term of $l_{ij}^{'}$ and $l_{ii}^{''}$ (see Appendix) is the quotient of products of brackets of the form $(x_i-x_j)$, every term in (\ref{b-eq}) is $\mathcal{O}(1)$, and every term in (\ref{x-eq}) is $\mathcal{O}(\sigma^\alpha)$, where a simple balancing argument shows $\alpha$ solves $2-\alpha=(2n-1)\alpha$.
\end{proof}
        
In the case $n=1$, $\underline{x}$, the locations of the $N$ point masses are exactly the roots of the degree $N$ Hermite polynomial, $He_N$, and $\underline{\beta}$ the Gaussian weights. For $n>1$, the moment equations are no longer lower diagonal, and the analogous result cannot hold due to the approximated upper moments introduced.% and $\underline{x},\underline{\beta}$ are an approximation to the Gauss-Galerkin approximant of $\exp(-x^{2n})$.
%\begin{proposition}[$x_i$, to the first approximation]
 %   Suppose drift $a(x)$ has $n$ simple, decreasing roots, $x^*_{i}$.\footnote{$a^{'}(x^*_{i})< 0$}

  %  Then the location of the point masses comprising $\tilde\rho_{0,nk}$ to the first approximation is $x_{ij}$, the $j^{th}$ root of $He_k\Big(\frac{\sqrt{a^{'}(x_i^*)}(x-x^*_i)}{\sigma}\Big)$.\footnote{The Probabilist's Hermite polynomial of order $k$}

    %The $Nk$ dimensional stationary solution of equations (\ref{x-eq},\ref{b-eq}), to the first order of $\sigma$, is

    %\begin{equation}\label{1oss}
    %    \rho^{\sigma}_{st}=\frac{1}{n}\sum_{i=1}^n\sum_{j=1}^k\beta_{ij}x_{ij}
    %\end{equation}
    
    %where $x^*_i$ is the $i^{th}$ root of $a$, such that $a^{'}(x)<0$ and  $k\in\mathbb{N}$, with weights $\beta_{ik}$ numbered concordantly.
%\end{proposition}

We are now in a position to construct first order solutions to the Gauss-Galerkin equations showing, reciprocally to Theorem \ref{big}, that close to a particular class of zeros of the drift, there is a solution.

\begin{proposition}\label{const}(The first approximation of $x_i$)
    Suppose drift $a(x)$ has $m$ simple, decreasing roots, $x^*_{i}$.\footnote{$a^{'}(x^*_{i})< 0$}
    Then the positions of the $mk$ point masses 
    comprising $\tilde\rho_{0,mk}$ can be expanded $\sigma<<1$,  
    $$
    x_{ij}=x^*_i+\sigma h_{j} + \mathcal{O}(\sigma^2) 
    $$
    with $j\leq k,\, i\leq m$ where $h_{j}$ is $j^{th}$ root of $He_k(x).$\footnote{The Probabilist's Hermite polynomial of order $k$}

    %The $Nk$ dimensional stationary solution of equations (\ref{x-eq},\ref{b-eq}), to the first order of $\sigma$, is

    %\begin{equation}\label{1oss}
    %    \rho^{\sigma}_{st}=\frac{1}{n}\sum_{i=1}^n\sum_{j=1}^k\beta_{ij}x_{ij}
    %\end{equation}
    
    %where $x^*_i$ is the $i^{th}$ root of $a$, such that $a^{'}(x)<0$ and  $k\in\mathbb{N}$, with weights $\beta_{ik}$ numbered concordantly.
\end{proposition}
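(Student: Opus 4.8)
The plan is to construct this stationary solution by an inner expansion about each simple root, reducing the full $mk$-node system to $m$ decoupled Ornstein--Uhlenbeck Gauss--Galerkin problems whose nodes are already known. Motivated by Theorem \ref{big} (the nodes must collapse onto roots of the drift) and by the $n=1$ case of Lemma \ref{sdescale} (which fixes the rate $x_i-x^*_i\propto\sigma$ at a simple root), I would posit that the $mk$ nodes split into $m$ clusters of $k$, one per root, and pass to the inner variables $y_{ij}=(x_{ij}-x^*_i)/\sigma$, treating $\sigma$ as a regular small parameter. The aim is then to show that the rescaled stationary Gauss--Galerkin equations extend continuously to $\sigma=0$, with a limit that decouples over the clusters, and to lift the limiting solution back to $\sigma>0$ by the implicit function theorem.

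The technical core is an order count in the inner variables. Since each root is simple, $a(x^*_i)=0$ and $a(x_{ij})=\sigma a'(x^*_i)y_{ij}+O(\sigma^2)$, so the drift term of (\ref{x-eq}) is $O(\sigma)$, while $b(x_{ij})=b(x^*_i)+O(\sigma)$ is constant across a cluster to leading order. The Lagrange factors must be split according to the two length scales: within a cluster $x_{ij}-x_{ij'}=\sigma(y_{ij}-y_{ij'})$, so $l'_{(ij)(ij)}=\sigma^{-1}\sum_{j'\neq j}(y_{ij}-y_{ij'})^{-1}+O(1)$ and, by the same counting, $(x_{ij'}-x_{ij})\,l'^{2}_{(ij)(ij')}=\sigma^{-1}(\cdots)+O(1)$, so that after the $\sigma^{2}$ prefactor these within-cluster diffusion contributions are again $O(\sigma)$. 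The key estimate is that the between-cluster terms are \emph{subdominant}: for $i'\neq i$ the $k-1$ within-cluster-$i$ factors of $l'_{(ij)(i'j')}$ each contribute $O(\sigma^{-1})$ while the $k-1$ within-cluster-$i'$ factors each contribute $O(\sigma)$, so $l'_{(ij)(i'j')}=O(1)$ and the inter-cluster contribution to (\ref{x-eq}) is $O(\sigma^{2})$. Dividing (\ref{x-eq}) by $\sigma$ therefore leaves, at leading order, a system involving only the variables of cluster $i$.

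This reduced system is exactly the stationary Gauss--Galerkin system for the linear drift $y\mapsto a'(x^*_i)y$ with constant diffusion $b(x^*_i)$ --- the $k$-node Gauss--Galerkin fixed point of an Ornstein--Uhlenbeck process in the variable $y$. By the $n=1$ case recorded after Lemma \ref{sdescale}, its nodes are precisely the roots of $He_k$ (up to the scale $b(x^*_i)/\sqrt{-2a'(x^*_i)}$ fixed by the linearisation, which the normalisation here sets to $1$), whence $y_{ij}\to h_j$ and $x_{ij}=x^*_i+\sigma h_j+O(\sigma^2)$. I would then invoke the implicit function theorem on the full rescaled $(y,\beta)$-system at $\sigma=0$ to produce the genuine branch together with the $O(\sigma^2)$ remainder; Lemma \ref{clo} keeps the weights positive and summing to one, so no cluster degenerates.

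The main obstacle is twofold. First, making the separation-of-scales estimates uniform and rigorous --- controlling the confluent-Vandermonde/Lagrange derivatives $l'_{(ij)(i'j')}$ and $l''$ simultaneously at the inner ($O(\sigma)$) and outer ($O(1)$) scales, and checking that every discarded term really is $O(\sigma^2)$ in both (\ref{x-eq}) and (\ref{b-eq}). Second, verifying the nondegeneracy required by the implicit function theorem: the reduced system is homogeneous of degree one in the weights, so the decoupled limit carries a one-parameter cluster-mass scaling freedom; I expect to quotient this out by normalising each cluster's mass and pinning the masses $w_i$ via the global constraint $\sum_i w_i=1$ together with the next-order inter-cluster coupling, after which the Hermite configuration is a simple zero because the Gauss--Christoffel nodes of a Gaussian are distinct. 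Since the leading node positions are independent of the cluster masses, the stated expansion survives this indeterminacy.
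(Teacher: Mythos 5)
Your proposal is correct and takes essentially the same route as the paper's proof: the cluster ansatz $x_{ij}=x^*_i+\sigma y_{ij}$ with $k$ nodes per simple root, the two-scale order count on the Lagrange derivatives (in the paper, via the observations $x_{in}-x_{jl}=\mathcal{O}(1)$ across clusters and $R^{'}_{jl}/R^{'}_{in}=\mathcal{O}(1)$, which is exactly your factor-by-factor counting) to discard the $\mathcal{O}(\sigma^2)$ inter-cluster diffusion terms, and the reduction of each decoupled cluster to the stationary Gauss--Galerkin system of an Ornstein--Uhlenbeck process, whose nodes are the roots of $He_k$ as recorded after Lemma \ref{sdescale}. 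Your additional implicit-function-theorem and cluster-mass nondegeneracy discussion goes beyond the paper's formal first-order matching (which, like you, leaves the leading node positions independent of the cluster masses), but the core argument is the same.
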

\begin{proof}
Informed by Theorem \ref{big} and Lemma \ref{sdescale}, we expand $(x_{ij}-x_i^*)=\sigma f_{ij,1}+\mathcal{O}(\sigma^2)$
implying, by expanding (\ref{b-eq}) that $\beta_{ij}=\beta_{ij}^0+\sigma g_{ij,1}+\mathcal{O}(\sigma^2)$. The first order solution is the pair $(f_{ij,1}, \beta_{ij}^0)$.

First we fix $m=1$. Dividing the $i^{th}$ equation by $b_i$ and expanding the drift, keeping the zeroth and first order terms,
$$
\big(1-\frac{(b^2)^{'}(x^*)}{b^2(x^*)}(x_j-x^*)\big)\big(-\frac{|a^{'}(x_i)|}{b^2(x_i)}(x_j-x^{*})\big)
$$
therefore we retain to the lowest order  
\begin{equation}\label{ddd}
    -\frac{|a^{'}(x^*)|}{b^2(x^*)}(x_j-x^{*})
\end{equation}

Similarly $\frac{b_j}{b_i}=\mathcal{O}(1)$ so we retain the stationary (\ref{x-eq}, \ref{b-eq}) with drift (\ref{ddd}), which is the just the Gauss-Galerkin equations for the Ornstein-Uhlenbeck process. Therefore $f_{j,1}=h_j$.

For $m>1$, we introduce double index $ij$, where abscissae in the $i^{\mathrm{th}}$ cluster converge to the $i^{\mathrm{th}}$ root of $a(x)$ like $x_{ij}-x^*_i=\mathcal{O}(\sigma^\alpha)$, and number the $\beta$s concordantly. 

To proceed, we need the following two observations: for nodes from different clusters, ${x_{in}-x_{jl}}=\mathcal{O}(1)$ and the number of nodes in any cluster is $k$, so for any two abscissae $\frac{R^{'}_{jl}}{R^{'}_{in}}=\mathcal{O}(1)$

For the equations for $(x_{ij},\beta_{ij})$, from the first observation, the only terms we retain in $l^{'}_{in,in}$ are those from the $i^{th}$ cluster, $\sum_{l\leq k,', l\neq n}\frac{1}{x_{in}-x_{il}}$. 

From the first and second observation terms $\sigma^2l^{'}_{il,jk}$ are an order greater than $\sigma^2l^{'}_{il,ik}$, and are consequently discarded.

Therefore any equations in a cluster only depends on the nodes and weights of that same cluster. After performing the same balancing procedure with the drift, the lowest order of drift to be retained is
$$
    -\frac{|a^{'}(x_i^*)|}{b^2(x^*_i)}(x_j-x_i^{*})
$$
which leaves us with precisely the evolution equation for $m=1$. 
The first order solution is consequently a superposition of first order solutions centered on each $x_i^*$ 
\end{proof}

\begin{remark}
    Stationary solutions converge to point masses on minima of the potential in Proposition \ref{const} while there is no such restriction in Theorem \ref{big}. 
    
    This distinction is not simply an artifact of the proof: a single point mass on a maxima is a solution to (\ref{x-eq},\ref{b-eq}). Less trivially, an existing solution can be augmented. For instance, consider the Gauss-Galerkin equations for $dX_t=X_t-X^3_t+\sigma dW_t$. Negative roots of the potential are at $\pm 1$ but a point mass placed at 0 will remain there by symmetry.
\end{remark}

The sequel is dedicated to applying the above results to the Gauss-Galerkin equations of McKean-Vlasov SDEs 
\begin{equation}\label{mvp}
	dX_t=\big(-\bar V^{'}(X_t)+\mathbb{E}[P^{'}(X_t)]\big)dt+\sigma b(X_t)dB_t
\end{equation}
That McKean-Vlasov SDEs of the form (\ref{mvp}) have row-finite moment equations which can be closed using GG-QMoM was explored in the introduction. More directly, with regard to Definition \ref{gge}, $\mathsf L^{mv}$, the generator of MV-SDE (\ref{proto1}) is itself a function of the moments of the law, which can be approximated by moments of the $n$-approximant to close the GG-QMoM equations (\ref{x-eq},\ref{b-eq}).

In the sequel, we term $\bar V - P$ the potential, $\bar V$ the effective potential. To apply the results of \cite{alecio5} we need the following assumptions. $P^{'}$ and $\bar V^{'}$ are polynomials and $\frac{1}{x^2}\int^x \frac{V^{'}}{b^2} dx>0$.\footnote{implying it is a confining potential} Additionally $\bar V^{''}\geq 0$ attaining the lower bound at a finite number of isolated points $\{\tilde{x}_i\}_i^m$ where $(\bar V-P)^{''}(\tilde{x}_i)\neq 0$

Such MV-SDEs are known to have multiple stationary measures. In this regard, we reference from section 1 of \cite{alecio5}. With the assumptions above, 

\begin{theorem}[Alecio \cite{alecio5}]\label{mv-sde}
If $(\bar V-P)^{'}$ has $N$ simple zeros, then there exists a critical threshold $\sigma_c$ such that for $\sigma<\sigma_c$, $F_\sigma(m)$ has precisely $N$ zeros.
\end{theorem}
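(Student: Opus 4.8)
The plan is to recast the statement as a count of zeros of a scalar self-consistency map and to control that count through the zero-noise limit, where convexity of the effective potential makes the count exact. Freeze the mean-field term of \eqref{mvp} at a scalar value $m$, so that the law becomes the invariant measure of the ordinary Smoluchowski SDE $dX=(-\bar V'(X)+m)\,dt+\sigma b(X)\,dB$, namely the Gibbs measure
\[
\rho^\sigma_m(dx)\propto \frac{1}{b^2(x)}\exp\!\Big(\tfrac{2}{\sigma^2}\Psi_m(x)\Big)\,dx,\qquad \Psi_m(x)=\int^x\frac{m-\bar V'(y)}{b^2(y)}\,dy .
\]
A stationary measure of \eqref{mvp} is exactly a fixed point of the self-consistency relation $m=\int P'\,d\rho^\sigma_m$, so writing $F_\sigma(m):=\int P'(x)\,\rho^\sigma_m(dx)-m$ the claim becomes: $F_\sigma$ has exactly $N$ zeros for all small $\sigma$. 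First I would record, from the confining hypothesis, that $\rho^\sigma_m$ is a well-defined probability measure with moments bounded uniformly for $m$ in compacts and $\sigma$ small; this both justifies $F_\sigma$ and feeds the tail control below.

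Next I would identify the limit $F_0=\lim_{\sigma\downarrow0}F_\sigma$ by Laplace asymptotics. The point is that $\Psi_m'(x)=(m-\bar V'(x))/b^2(x)$ changes sign exactly once, from $+$ to $-$, because $\bar V'$ is strictly increasing ($\bar V''\ge0$ with only isolated zeros); hence $\Psi_m$ is unimodal with a \emph{unique} global maximiser $x^\star(m)$, characterised by $\bar V'(x^\star(m))=m$. Thus $\rho^\sigma_m\Rightarrow\delta_{x^\star(m)}$ and, using the uniform moment bounds, $F_0(m)=P'(x^\star(m))-m$. Under the increasing change of variable $m=\bar V'(x)$ — a bijection since $\bar V'$ is strictly increasing — this reads $F_0=-(\bar V-P)'(x)$, so the zeros of $F_0$ are in bijection with the $N$ simple zeros of $(\bar V-P)'$, and $F_0'(m)=-(\bar V-P)''(x^\star)/\bar V''(x^\star)\neq0$ at each, i.e. every zero of $F_0$ is a transversal crossing (at a degenerate abscissa $\tilde x_i$ the tangent is vertical, but $(\bar V-P)''(\tilde x_i)\neq0$ still forces a sign change). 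It is precisely this convexity of the \emph{effective} potential that lets the self-consistency register the maxima of the full potential $\bar V-P$, not only its minima.

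Finally I would upgrade to a count for $\sigma>0$ via $C^1$-convergence and tail control. Differentiating under the integral gives $F_\sigma'(m)=\tfrac{2}{\sigma^2}\mathrm{Cov}_{\rho^\sigma_m}\!\big(P'(X),K(X)\big)-1$ with $K(x)=\int^x b^{-2}$, and the Gaussian Laplace expansion of this covariance converges to $F_0'$ uniformly for $m$ in a compact interval $[-M,M]$ chosen to contain all zeros of $(\bar V-P)'$. On each small neighbourhood of a zero of $F_0$ the limit $F_0'$ is bounded away from $0$, so for small $\sigma$ the map $F_\sigma$ is strictly monotone there and has exactly one zero; on the complement of these neighbourhoods inside $[-M,M]$, $F_0$ is bounded away from $0$, so uniform $C^0$-convergence forbids any further zero. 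For $|m|>M$, the relation $F_\sigma(m)=-(\bar V-P)'(x^\star(m))+o(1)$ together with the fixed sign of the polynomial $(\bar V-P)'$ beyond its extreme roots rules out escaping zeros. Taking $\sigma_c$ below all the thresholds so produced yields exactly $N$ zeros.

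I expect the main obstacle to be the \emph{uniformity} of the Laplace asymptotics for $F_\sigma'$ together with the tail estimate at $|m|>M$: both the leading Gaussian term and its correction must be controlled uniformly in $m$, and the passage through the degenerate points $\tilde x_i$ — where $\bar V''=0$, the rescaling is non-Gaussian, and $F_0'$ diverges — must be handled by a separate one-sided expansion that preserves transversality of the crossing. Establishing that no spurious zero of $F_\sigma$ appears at large $|m|$ uniformly in small $\sigma$ is the step most dependent on the confining growth and the polynomial structure of $(\bar V-P)'$.
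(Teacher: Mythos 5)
The paper itself offers no proof of Theorem~\ref{mv-sde}: the result is imported verbatim from \cite{alecio5} (``we reference from section 1 of \cite{alecio5}''), so there is no internal argument to compare your attempt against line by line. That said, your reconstruction is the natural --- and almost certainly the intended --- route, and it is consistent with how the present paper uses the machinery it imports: your transversality statement $\mathrm{sign}\,F_0'(m)=\mathrm{sign}\bigl(-(\bar V-P)''(x^\star(m))\bigr)$ is exactly the content of the unnumbered corollary preceding Lemma~\ref{stab1}, which localises $\mathrm{sign}(F_\sigma')$ to $\mathrm{sign}(-V'')$ near each $x_i^*$ for small $\sigma$. Your key structural observations check out: freezing the scalar $m=\mathbb{E}[P'(X)]$ does reduce \eqref{mvp} to a Smoluchowski SDE whose Gibbs measure is unimodal because $\bar V''\ge0$ with isolated zeros makes $\bar V'$ a strictly increasing polynomial bijection; the covariance formula $F_\sigma'(m)=\tfrac{2}{\sigma^2}\mathrm{Cov}_{\rho^\sigma_m}(P'(X),K(X))-1$ is a correct differentiation under the integral; and the change of variables $m=\bar V'(x)$ correctly converts zeros of $F_0$ into the $N$ simple zeros of $(\bar V-P)'$, which is precisely the mechanism (convexity of the \emph{effective} potential) by which the self-consistency map registers maxima of $\bar V-P$ as well as minima --- the point the paper leans on when contrasting Proposition~\ref{const} with Proposition~\ref{mvsdeapx}.

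As a complete proof, however, what you have is a correct skeleton with the hard analysis deferred, and you have identified the deferred steps accurately rather than glossed them: (i) uniformity of the Laplace expansions for $F_\sigma$ and $F_\sigma'$ over the compact $[-M,M]$; (ii) exclusion of zeros at large $|m|$ uniformly in small $\sigma$, which does require the confining growth $\int^x (V'/b^2)\to\infty$ to fix the sign of $F_0$ in the tails; and (iii) the degenerate abscissae $\tilde x_i$ where $\bar V''(\tilde x_i)=0$, at which the fluctuation scale is non-Gaussian, $F_0'$ blows up, and only the hypothesis $(\bar V-P)''(\tilde x_i)\neq0$ rescues a sign change --- note this is exactly why that hypothesis appears in the paper's standing assumptions, and why Proposition~\ref{mvsdeapx} introduces the anomalous exponent $\sigma^{1/n_i}$ at such points. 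One small caution: you must also verify that your $F_\sigma$ matches the one of \cite{alecio5}; the present paper pins it down only implicitly, via the equivalence of the first MEE with the self-consistency equation (which makes your definition $F_\sigma(m)=\int P'\,d\rho^\sigma_m-m$ the right reading, coinciding with the first-moment map in the Dawson--Shiino case $P'(x)=x$).
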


This implies the bifurcation diagram has $N$ branches for sufficiently small $\sigma$. Something similar can be shown for the GG-QMoM equations: the zeros of the GG-QMoM drift $-\bar V^{'}_i+\sum_j\beta_j P^{'}_j$ correspond to the zeros of $V^{'}-P^{'}$ and by Theorem \ref{big}, these must be the limit points of stationary solutions. An analogue of Proposition \ref{const} is given, guaranteeing nearby solutions to all zeros. 
 
\begin{lemma}[Critical points]\label{cps}
	$\forall n \in \mathbb{N}$, the only solution of stationary drift equation $-\bar V^{'}_i+\sum_j\beta_j P^{'}_j=0$ is $x\mathbf{1}_n$, where $x$ is any solution of $\bar V^{'}-P^{'}=0$
\end{lemma}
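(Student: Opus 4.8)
The plan is to exploit the special structure of the interaction term. The crucial observation is that $C := \sum_j \beta_j P'_j = \sum_j \beta_j P'(x_j)$ carries no dependence on the index $i$: it is one and the same scalar in every equation of the system. Hence the stationary drift equations $-\bar V'_i + \sum_j \beta_j P'_j = 0$ decouple into $n$ copies of the scalar relation $\bar V'(x_i) = C$, $i = 1, \dots, n$, linked only through the common value $C$. Everything then reduces to understanding when $\bar V'$ can take a single value at several nodes.

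Next I would establish that $\bar V'$ is injective. By the standing hypothesis $\bar V'' \geq 0$, so $\bar V'$ is non-decreasing. Were $\bar V'(a) = \bar V'(b)$ for some $a < b$, monotonicity would force $\bar V'$ to be constant on $[a,b]$, giving $\bar V'' \equiv 0$ on $(a,b)$; this contradicts the assumption that $\bar V''$ vanishes only on the finite isolated set $\{\tilde x_i\}$. Therefore $\bar V'$ is strictly increasing, hence injective.

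Injectivity collapses the decoupled system at once: $\bar V'(x_i) = C$ for all $i$ forces the $x_i$ to share a common value $x$, that is $\underline x = x \mathbf{1}_n$. Substituting back and invoking $\sum_j \beta_j = 1$ from Lemma \ref{clo}, the constant becomes $C = P'(x)\sum_j \beta_j = P'(x)$, so the relation $\bar V'(x) = C$ reads $\bar V'(x) - P'(x) = 0$, identifying $x$ as a zero of the potential's derivative. Conversely, any such zero yields a solution $x \mathbf{1}_n$ for arbitrary admissible weights, since then $\bar V'_i = \bar V'(x)$ and $\sum_j \beta_j P'_j = P'(x)$ for every $i$; this also explains why the weights play no role in the drift balance. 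The argument is largely mechanical, and I expect the only step genuinely drawing on the modelling hypotheses—and thus the point to handle with care—to be the passage from convexity of the effective potential $\bar V$ to strict monotonicity, and hence injectivity, of $\bar V'$. This is precisely what excludes spurious configurations in which distinct nodes share a common drift value, and it is what ties the zeros of the GG-QMoM drift to those of $\bar V' - P'$ as anticipated in the surrounding discussion and in Theorem \ref{big}.
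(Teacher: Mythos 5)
Your proof is correct and follows essentially the same route as the paper's: since the interaction term $\sum_j \beta_j P'_j$ is independent of $i$, injectivity of $\bar V'$ forces $x_i = x$ for all $i$, and substituting back with $\sum_j \beta_j = 1$ yields $\bar V'(x) - P'(x) = 0$. The only difference is that you explicitly derive the injectivity of $\bar V'$ from the hypothesis $\bar V'' \geq 0$ with isolated zeros, a step the paper simply asserts by calling $\bar V'$ a homeomorphism (and your version avoids the paper's stray cube root, evidently a typo carried over from the cubic example).
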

\begin{proof} As $\bar V^{'}$ is a homeomorphism, $\forall i, j,\, x_i=x_j:=x=(\bar V^{'})^{-1}(\sum_j\beta_jP^{'}_j)^\frac{1}{3}$. 
Then substituting $\sum_j\beta_j P^{'}(x)=P^{'}(x)$ back into the drift equations, $x$ must solve $\bar{V}^{'}-P^{'}=0$.
\end{proof}

Again, analogously to Lemma \ref{sdescale}, stationary solutions to the Gauss-Galerkin equations with simple drift and constant diffusion scale with diffusion strength.

\begin{lemma}\label{scalemv}
    Suppose $(\underline{x},\,\underline{\beta})$ is a solution to (\ref{x-eq},\ref{b-eq}) with $b\equiv 1$ and $a=-kx^{2n-1}$. Then ($\sigma^\alpha\underline{x},\,\underline{\beta})$ is a solution to (\ref{x-eq},\ref{b-eq}) with $a=-kx^{2n-1}+\theta\sum_jx_j^{2m+1}$ and $b\equiv\sigma$, where $\alpha=\frac{1}{n}$
\end{lemma}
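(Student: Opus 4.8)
The plan is to re-run the substitution-and-balancing argument of Lemma~\ref{sdescale}, adding one step to dispose of the mean-field term. First I would set $x_j=\sigma^\alpha\tilde x_j$ in the stationary equations (\ref{x-eq},\ref{b-eq}) and recall, as there, that each $l'_{ij}$ is a quotient of products of brackets $(x_p-x_q)$ with exactly one more factor in the denominator than in the numerator, so that $l'_{ij}=\mathcal O(\sigma^{-\alpha})$ and $l''_{ii}=\mathcal O(\sigma^{-2\alpha})$. With the local drift $-kx_i^{2n-1}=\mathcal O(\sigma^{(2n-1)\alpha})$ and the explicit $\sigma^2$ multiplying the diffusion block of (\ref{x-eq}), balancing drift against diffusion reproduces $2-\alpha=(2n-1)\alpha$, i.e. $\alpha=1/n$; for this exponent every term of (\ref{b-eq}) carries the common factor $\sigma^{2(n-1)\alpha}$, so the stationary $\beta$-equation is scale invariant and $\underline\beta$ is unchanged.

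The genuinely new ingredient is the mean-field contribution to the drift, $a_i=-kx_i^{2n-1}+\theta\sum_j\beta_j x_j^{2m+1}$. Since it is the expectation $\theta\,\mathbb E[P'(X)]$, this contribution is a single constant, identical for every node $i$ rather than a power of $x_i$; under dilation it would shift the zero of the drift instead of dilating it, and as $(2m+1)\alpha\neq(2n-1)\alpha$ in general it cannot be accommodated by the single exponent $\alpha=1/n$. The resolution is that along the reference solution this constant vanishes: the reference drift $-kx^{2n-1}$ is anti-symmetric and $b\equiv1$ symmetric, so by Proposition~\ref{parity} the relevant $(\underline x,\underline\beta)$ is symmetric and every odd sum $\sum_j\beta_j x_j^{2m+1}$ is zero; dilation by $\sigma^\alpha$ preserves symmetry about the origin, so the mean-field term stays zero on the scaled configuration.

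With the mean-field term identically absent from both (\ref{x-eq}) and (\ref{b-eq}) on a symmetric configuration, the stationary McKean--Vlasov Gauss-Galerkin system for $a=-kx^{2n-1}+\theta\sum_j\beta_j x_j^{2m+1}$ coincides term-by-term with the plain-SDE system for $a=-kx^{2n-1}$, and the balancing above applies verbatim to give that $(\sigma^\alpha\underline x,\underline\beta)$ solves the target equations with $\alpha=1/n$. I expect the main obstacle to be exactly this treatment of the mean-field term: one must show that it cannot enter the self-similar balance, and the only clean way to do so is to invoke the symmetry of the reference solution to annihilate the offending odd moment. Were the solution asymmetric, the constant $\theta\,\mathbb E[P'(X)]$ would translate the root of the effective drift, and no single scaling exponent could map one stationary configuration onto another, so the symmetry built into the choice of reference solution is doing essential work rather than merely simplifying bookkeeping.
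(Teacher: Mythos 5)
Your proposal is correct and takes essentially the same route as the paper: the paper's proof likewise restricts to symmetric solutions via Proposition \ref{parity}, observes that the odd mean-field sum $\sum_j\beta_j x_j^{2m+1}$ then vanishes identically (and remains zero under dilation), and concludes by invoking Lemma \ref{sdescale}. The balancing computation you re-derive is exactly the content the paper delegates to Lemma \ref{sdescale}, so your write-up is just a more explicit version of the same argument.
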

\begin{proof}
    Following Proposition \ref{parity} we again look for symmetric solutions. Then 
    $$\sum_j\beta_j (x_j-0)^{2m+1}=0$$
    The result follows by Lemma \ref{sdescale}.
\end{proof}
This scaling property will be made use of in the upcoming example. Analogously to Proposition \ref{const}, 

\begin{proposition}(The first approximation of $x_i$ for MV-SDEs)\label{mvsdeapx}
    Suppose $V^{'}$ has $n$ roots at $x^*_{i}, \,i\leq n$. Set $n_i$ such that $2n_i$ is the lowest order non-null derivative of $\bar V$ at $x^*_{i}$.
    
    Then the positions of the $k$ point masses 
    comprising $\tilde\rho_{0,k}$ can be expanded $\sigma<<1$,  
    $$
    x_{j}=x^*_i+\sigma^{\frac{1}{n_i}} h_{j} + \mathcal{O}(\sigma^{1+\frac{1}{n_i}}) 
    $$
    with $j\leq k,\, i\leq n$ where $h_{j}$ is the $j^{th}$ solution of (\ref{x-eq}), the Gauss-Galerkin equations (\ref{x-eq},\ref{b-eq}) with $a=-x^{2n_i+1}$ and $b=1$.

    %The $Nk$ dimensional stationary solution of equations (\ref{x-eq},\ref{b-eq}), to the first order of $\sigma$, is

    %\begin{equation}\label{1oss}
    %    \rho^{\sigma}_{st}=\frac{1}{n}\sum_{i=1}^n\sum_{j=1}^k\beta_{ij}x_{ij}
    %\end{equation}
    
    %where $x^*_i$ is the $i^{th}$ root of $a$, such that $a^{'}(x)<0$ and  $k\in\mathbb{N}$, with weights $\beta_{ik}$ numbered concordantly.
\end{proposition}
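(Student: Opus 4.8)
The plan is to mirror the proof of Proposition \ref{const}, with two modifications forced by the mean-field structure. The roots around which the point masses cluster are those of the \emph{full} drift $V' = \bar V' - P'$ (as dictated by Theorem \ref{big} together with Lemma \ref{cps}), whereas the \emph{rate} at which a cluster spreads is governed by the local degeneracy of the \emph{effective} potential $\bar V$, not of $V$. Accordingly, for each root $x^*_i$ I would posit the ansatz $x_j = x^*_i + \sigma^{\alpha_i} f_j + \mathcal{O}(\sigma^{2\alpha_i})$ and $\beta_j = \beta_j^0 + \mathcal{O}(\sigma^{\alpha_i})$ with $\alpha_i$ to be determined, substitute into the stationary equations (\ref{x-eq},\ref{b-eq}), and balance powers of $\sigma$ exactly as in Lemma \ref{sdescale}.

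The essential new computation is the local expansion of the closed mean-field drift $a_i = -\bar V'(x_i) + \sum_l \beta_l P'(x_l)$. Writing $I = \sum_l \beta_l P'(x_l)$ for the interaction field, Lemma \ref{cps} pins its limiting value to $P'(x^*_i)$, and since $x^*_i$ is a root of $V'$ we have $\bar V'(x^*_i) = P'(x^*_i)$; hence the constant parts of $-\bar V'(x_i)$ and $I$ cancel, exactly as the bare drift vanished to leading order in Proposition \ref{const}. The first surviving term then comes from the lowest non-vanishing derivative of $\bar V$ at $x^*_i$, which by hypothesis is $\bar V^{(2n_i)}$, so that $\bar V'$ has leading behaviour of degree $2n_i - 1$ and $a_i \sim -c_i (x_i - x^*_i)^{2n_i-1}$ with $c_i = \bar V^{(2n_i)}(x^*_i)/(2n_i-1)! > 0$, positivity following from $\bar V'' \geq 0$. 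Crucially, the deviation $I - P'(x^*_i)$ must be shown to be of strictly higher order than this monomial: expanding $I - P'(x^*_i) = P''(x^*_i)\sum_l \beta_l (x_l - x^*_i) + \mathcal{O}(\sigma^{2\alpha_i})$, the first-moment term vanishes at leading order because the reduced cluster profile is symmetric (by Proposition \ref{parity} and the symmetry of the monomial-drift solution), leaving an $\mathcal{O}(\sigma^{2\alpha_i})$ correction negligible beside the retained drift.

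With the interaction neutralised, each cluster is governed at leading order by the stationary Gauss-Galerkin equations for the pure monomial drift $a = -c_i\, x^{2n_i-1}$, $b \equiv 1$. Rescaling via Lemma \ref{sdescale} (with its $n$ taken to be $n_i$) fixes $\alpha_i = 1/n_i$ and identifies the leading coefficients $f_j = h_j$ as the solutions of those monomial-drift equations, which is precisely the $\sigma^{1/n_i}$ spread asserted in the statement. The multi-cluster assembly is then handled as in Proposition \ref{const}: inter-cluster node separations are $\mathcal{O}(1)$, so the cross terms in the off-diagonal Lagrange derivatives and in $I$ are subdominant, and the equations decouple cluster by cluster, each reducing to the one-root case centred on its own $x^*_i$.

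The main obstacle is the middle step. Unlike the fixed-drift setting of Proposition \ref{const}, at a degenerate root the retained local drift is anomalously weak, of order $\mathcal{O}(\sigma^{2-1/n_i})$, so one must genuinely rule out the possibility that the self-consistent coupling $I$ enters at the same or a lower order and thereby alters the balance. This is exactly where the self-consistency of Lemma \ref{cps} and the symmetry-driven vanishing of the odd cluster moments (Lemma \ref{scalemv} and Proposition \ref{parity}) are indispensable; making rigorous the bootstrap that the leading cluster profile is symmetric, rather than merely assuming it, is the delicate point of the argument.
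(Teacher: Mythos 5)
Your proposal follows the paper's proof essentially verbatim: expand the closed drift about each root of $V^{'}$, cancel the constant part via $\bar V^{'}(x^*_i)=P^{'}(x^*_i)$ (Lemma \ref{cps}), annihilate the first-order interaction term by cluster symmetry (Lemma \ref{scalemv}, Proposition \ref{parity}), and reduce each cluster to the monomial-drift Gauss--Galerkin equations, with the balancing of Lemma \ref{sdescale} fixing $\alpha_i=\frac{1}{n_i}$ and identifying the leading coefficients with the $h_j$. The one point where you are more candid than the paper --- whether the residual interaction enters below the anomalously weak retained drift at a degenerate root --- is handled in the paper by the same appeal to Lemmata \ref{cps} and \ref{scalemv} (where the odd interaction terms vanish exactly by symmetry, not merely to leading order), so your flagged ``delicate bootstrap'' coincides with, rather than departs from, the published argument.
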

\begin{proof}
    That $n_i$ exists follows from assumptions on the effective potential. Expanding the drift in the zeroth and first order terms,
\begin{equation}
\begin{split}
\big(1-\frac{(b^2)^{'}(x^*_i)}{b^2(x^*_i)}(x_j-x^*_i)\big)&\big(-\frac{\bar V^{(2n)}(x_i^*)(x_j-x^{*}_i)}{b^2(x_i^*)}\\
&+\frac{(P^{'}(x^*_i)-\bar V^{'}(x^*_i))+\frac{1}{N}\sum_k P^{''}(x_i^*)(x_k-x^*_i)}{b^2(x_i^*)}
\big)    
\end{split}
\end{equation}

Following lemmata \ref{cps} \textit{\&} \ref{scalemv}, the terms on the second line are null so we retain to the lowest order  
\begin{equation}
    -\frac{\bar V ^{(2n_i)}(x_i^*)}{b^2(x_i^*)}(x_j-x^{*}_i)^{2n_i-1}
\end{equation}
the coefficient of which is positive by assumption. 

By lemmata \ref{sdescale} and \ref{scalemv}, the solution to the lowest order is that of Gauss Galerkin equations (\ref{x-eq},\ref{b-eq}) with $a=-\frac{\bar V^{(2n_i)}}{b^2(x_i^*)}(x-x^*)^{2n_i-1}\,\textit{\&} \,\,b=\sigma$, from which the result follows.
\end{proof}

\begin{remark}
    Notice the apparent disjunction between Proposition \ref{const} and \ref{mvsdeapx}: in \ref{mvsdeapx}, stationary solutions converge to any extrema of the potential while in \ref{const} they only converge to minima (or superpositions thereof) of the potential. In fact, there is no real tension here. By the reasoning of Proposition \ref{mvsdeapx}, 
    $a_i\approxeq c-\bar V^{'}$, with $c=P^{'}(x^*)$, which has one root, at $x^*$, corresponding to a minima of its negative primitive.

    Underpinning this caricature is the asymptotically symmetrical arrangement of nodes so that the first order terms of $P^{'}$ cancel out, like Lemma \ref{scalemv}. In a way made precise in the sequel, away from this strict arrangement $a_i$ behaves more like potential $(V-P)^{'}$, with ramifications on the stability of these stationary measures.
\end{remark}

The final aspect of critical behaviour to review is (in)stability of stationary measures. It is clear that if $F^{'}_\sigma(m_i)<0$ then $\rho_0[m_i]$, (where $x^*(m_i)=x^*_i$) is unstable. In the case of quadratic interaction (explicit dependence on $m_1$), variational methods show the converse, see \cite{shiino} for instance. 

%For estimates compatible with the continuum limit, we must consider deformations that move all the quadrature points in a commensurate direction. The simplest example of this would be translation.       Preserving the parallels between the full and truncated system,

Paralleling this calculation for the truncated system, we study differential perturbations analogous to releasing the full system from a unimodal distribution `close' to the stationary distribution. This corresponds to moving all the stationary nodes in a commensurate direction (a \textit{unidirectional} perturbation), such as translation. A calculation is then performed, sufficient to show instability, and indicative of stability.

%With $\tilde\rho_{0,n}$, we restrict our attention to deformations compatible with the continuum limit $N\rightarrow\infty$,  

%need to choose a perturbations representative of a differentiable perturbation of a unimodal measure in the limit $N\rightarrow\infty$, such as a translation. We consider a subset of such perturbations and 

The author in \cite{alecio5} demonstrates and exploits the equivalency of the first MEE and self-consistency equation. Perturbing the nodes as a function of $h$ we have 
\begin{equation}\label{sum}
\frac{d\tilde m_1}{dh}=\sum_i\beta_i(-V^{''}(x_i))\frac{d x_i}{dh}
\end{equation}

It is clear unidirectional perturbations - $\frac{dx_i}{dh}>0$ (respectively $<0$) - increases (decreases) $m_1$ 
At this juncture we invoke a corollary of a theorem in \cite{alecio5}
\begin{cll}
    There exists $\epsilon>0$ and $\sigma_c$ such that, $\sigma<\sigma_c$ and $i\leq n$, $$\mathrm{sign}_{x\in J_i}(-V^{''}(x))=\mathrm{sign}_{m\in x^{*-1}(J_i)}(F^{'}_\sigma(m))$$
    where $J_i=B_{x^*_i}(\epsilon)$
\end{cll}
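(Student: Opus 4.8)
The plan is to read the sign of $F'_\sigma$ directly off the node-perturbation identity (\ref{sum}), using the clustering of the $n$-approximant's nodes established in Theorem \ref{big} and quantified in Proposition \ref{mvsdeapx}. First I would fix $i\leq n$ and choose $\epsilon>0$ small enough that $-V''$ is nonvanishing on $J_i=B_{x^*_i}(\epsilon)$. This is possible because $x^*_i$ is a simple zero of the drift $(\bar V-P)'$ (Theorem \ref{mv-sde}), so that $V''(x^*_i)=(\bar V-P)''(x^*_i)\neq 0$; by continuity, after shrinking $\epsilon$ we have the constant sign $\mathrm{sign}(-V''(x))=\mathrm{sign}(-V''(x^*_i))=:s_i$ throughout $J_i$.

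Next I would use Theorem \ref{big} together with the expansion of Proposition \ref{mvsdeapx} to fix $\sigma_c$ so small that, for every $\sigma<\sigma_c$, all nodes $x_j$ of the unimodal stationary solution $\rho_0[m_i]$ lie inside $J_i$. On such a configuration a unidirectional perturbation, say $\tfrac{dx_j}{dh}>0$ for all $j$, makes every summand of (\ref{sum}) carry the common sign $s_i$: indeed $\beta_j>0$ by Lemma \ref{clo}, $\tfrac{dx_j}{dh}>0$ by hypothesis, and $-V''(x_j)$ has sign $s_i$ throughout $J_i$. Hence $\mathrm{sign}\big(\tfrac{d\tilde m_1}{dh}\big)=s_i$, independently of the particular unidirectional perturbation chosen.

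Finally I would invoke the equivalence of the first MEE with the self-consistency equation from \cite{alecio5}: under this identification the response $\tfrac{d\tilde m_1}{dh}$, measured along the perturbation curve $h\mapsto m=\sum_j\beta_j x_j$, transports to $F'_\sigma$ evaluated along $x^{*-1}(J_i)$. Consequently $\mathrm{sign}(F'_\sigma(m))=s_i=\mathrm{sign}(-V''(x))$ for $m\in x^{*-1}(J_i)$ and $x\in J_i$, and since there are only finitely many $i\leq n$ the thresholds $\sigma_c$ and radii $\epsilon$ can be taken uniform over $i$, giving the stated $\epsilon$ and $\sigma_c$.

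The \emph{main obstacle} is the last step's biconditional. The computation (\ref{sum}) only probes unidirectional moves of the nodes, which is exactly enough to force the sign of the self-consistency response in one direction; this is the content of the earlier remark that the calculation is ``sufficient to show instability.'' Upgrading this to the full sign equality $\mathrm{sign}(-V'')=\mathrm{sign}(F'_\sigma)$ — in particular the ``indicative of stability'' direction, where one must exclude that some non-unidirectional perturbation reverses the sign of $F'_\sigma$ — is precisely what the parent theorem of \cite{alecio5} supplies. The remaining care lies in transporting its conclusion through the map $x^*$ and in choosing $\sigma_c$ and $\epsilon$ compatibly across the distinct clusters indexed by $i$.
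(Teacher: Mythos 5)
Your argument runs in the opposite direction to the paper's. In the paper this corollary is not proved from the truncated dynamics at all: it is a statement about the exact self-consistency function $F_\sigma$ of the McKean--Vlasov equation and is quoted, without proof, as a corollary of a theorem in \cite{alecio5} (a localisation, near each critical point $x^*_i$, of the analysis behind Theorem \ref{mv-sde}). In the paper's logic the corollary is the \emph{input} that licenses reading the truncated computation (\ref{sum}) as a stability statement (Lemma \ref{stab1}); you instead try to derive the corollary \emph{from} (\ref{sum}), Theorem \ref{big} and Proposition \ref{mvsdeapx}, and then, at the crucial biconditional, you concede that the step ``is precisely what the parent theorem of \cite{alecio5} supplies.'' That makes the construction circular: the content you import from \cite{alecio5} at the end is, up to the localisation in $J_i$, the corollary itself.

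The concrete gap is the transport step in your third paragraph. The equivalence of the first MEE with the self-consistency equation, established in \cite{alecio5}, is an equivalence of \emph{stationarity conditions} (zeros), not of derivatives. The quantity $\frac{d\tilde m_1}{dh}$ in (\ref{sum}) is the response of a finite-$N$ approximant to a unidirectional node perturbation; nothing in the paper identifies this with $F'_\sigma(m)$, the derivative of the exact self-consistency map along its $m$-parametrisation --- Theorem \ref{convergencethm} is a fixed-time, $N\rightarrow\infty$ statement, Theorem \ref{big} is a $\sigma\downarrow 0$ statement at fixed $N$, and neither yields convergence of these derivative responses, let alone the uniformity in $N$ and $\sigma$ your identification would need. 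Moreover, in the exact system $F'_\sigma$ probes perturbations along the one-parameter family of local equilibria indexed by $m$, not arbitrary unidirectional node moves. Absent that bridge, your first two paragraphs establish only a sign statement about the truncated response --- essentially Lemma \ref{stab1}, which the paper derives \emph{after}, and by means of, the corollary --- not the asserted sign equality for $F'_\sigma$. A self-contained proof would have to work directly with $F_\sigma$ (for instance via Laplace asymptotics of the self-consistency integral as $\sigma\downarrow 0$ near each $x^*_i$), which is what the cited theorem in \cite{alecio5} does.
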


\begin{lemma}\label{stab1}
For any unidirectional perturbation $h$ and sufficiently small $\sigma$, 
$$\mathrm{sign}(\frac{d\tilde m_1}{dh}|_{\tilde\rho_0})=\mathrm{sign}(-V^{''}(x^*_i))$$
\end{lemma}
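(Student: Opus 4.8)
The plan is to read the sign of $\frac{d\tilde m_1}{dh}$ directly off the sum (\ref{sum}) by showing that, for $\sigma$ small enough, every summand carries the same sign, so that no cancellation can occur. Three factors control the sign of a generic term $\beta_j(-V''(x_j))\frac{dx_j}{dh}$: the weight $\beta_j$, the curvature $-V''(x_j)$, and the displacement rate $\frac{dx_j}{dh}$. I would pin down the sign of each in turn and then combine them, the content of the argument being that localisation of the nodes forces the curvature factor to be constant across the cluster.

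First I would localise the nodes. The lemma concerns a unimodal stationary approximant $\tilde\rho_0$ whose $k$ point masses concentrate at a single root $x^*_i$ of $(\bar V-P)'$; by Proposition \ref{mvsdeapx} (or, more crudely, Theorem \ref{big}) every node satisfies $x_j=x^*_i+O(\sigma^{1/n_i})$, so there is a threshold below which all $x_j$ lie in the neighbourhood $J_i=B_{x^*_i}(\epsilon)$ furnished by the Corollary. Since $x^*_i$ is a simple zero of $(\bar V-P)'$, we have $V''(x^*_i)=(\bar V-P)''(x^*_i)\neq 0$, so after shrinking $\epsilon$ if necessary $-V''$ keeps the constant sign $\mathrm{sign}(-V''(x^*_i))$ throughout $J_i$ — this is exactly what makes the notation $\mathrm{sign}_{x\in J_i}(-V''(x))$ in the Corollary meaningful. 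Hence $-V''(x_j)$ has the common sign $\mathrm{sign}(-V''(x^*_i))$ for every node of $\tilde\rho_0$.

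The remaining two factors are immediate: $\beta_j>0$ by Lemma \ref{clo}, and for a unidirectional perturbation the displacements $\frac{dx_j}{dh}$ all share one sign (with at least one nonzero), which I take to be positive by orienting $h$ along the direction of motion (e.g. $h$ the translation parameter, $\frac{dx_j}{dh}\equiv 1$). Consequently every term of (\ref{sum}) carries the sign $\mathrm{sign}(-V''(x^*_i))$, the sum inherits it without cancellation, and $\mathrm{sign}(\frac{d\tilde m_1}{dh}|_{\tilde\rho_0})=\mathrm{sign}(-V''(x^*_i))$ follows. Through the Corollary this sign coincides with $\mathrm{sign}(F'_\sigma)$, which is what ties the computation back to (in)stability.

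The hard part will be the localisation step and its interplay with the two smallness thresholds: I must choose $\sigma$ small enough simultaneously so that $\sigma<\sigma_c$ (so the Corollary applies on $J_i$) and so that the cluster width $O(\sigma^{1/n_i})$ fits inside the constant-sign ball $J_i$. The degenerate possibility $V''(x^*_i)=0$ is excluded by the simple-zero hypothesis on $(\bar V-P)'$; were it to fail, $-V''$ could change sign across the cluster and the no-cancellation argument would collapse. I should also flag that the identity as stated fixes an orientation of $h$ — reversing it flips $\frac{d\tilde m_1}{dh}$ together with the reference perturbation, so the stability conclusion is unaffected. Everything else (positivity of weights, the shared sign of a unidirectional perturbation, and the passage from termwise sign to the sign of the sum) is routine once the nodes are confined to $J_i$.
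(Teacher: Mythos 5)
Your proposal is correct and is essentially the paper's own argument in expanded form: the paper likewise localises all nodes in the constant-sign neighbourhood $J$ via Theorem \ref{big}, so that each term of (\ref{sum}) carries the sign of $-V^{''}(x^*_i)$ and no cancellation occurs. Your additional care about the positivity of $\beta_j$ (Lemma \ref{clo}), the orientation of $h$, and the simultaneous smallness thresholds merely makes explicit what the paper's one-line proof leaves implicit.
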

\begin{proof}
By Theorem \ref{big} each individual term in sum (\ref{sum}) tends to $-V^{''}(x_i^*)$, so eventually all $x_i\in J$ and the result follows.
\end{proof}

\subsection{Example: Dawson-Shiino Model - Bistable Potential with Quadratic Interaction}

In this section, we focus on the quartic potential $V^{'}=x^3-x$, and $P^{'}=x$. Explicitly,
\begin{equation}
\label{sdep}
dX_t=(-X_t^3+m_1(t))dt+\sigma dW_t
\end{equation}
 with concomitant non-linear, non-local Fokker-Planck equation 
\begin{equation}
\label{fp}
\rho_t=\frac{\partial}{\partial x}\big[ [x^3-\mu]\rho+\frac{\sigma^2}{2}\rho_x\big]
\end{equation}
We summarise the critical behaviour studied in \cite{dawson,shiino}. 

\begin{proposition}[\cite{dawson,shiino}]\label{summ}$ $
    \begin{enumerate}
    \item For all $\sigma$, there is a symmetric stationary solution to (\ref{fp}), $\rho_0^0$
		\item Below the critical threshold $\sigma_c\approxeq0.956$ there are exactly 3 stationary solutions.
		\item The symmetric stationary solution becomes unstable below $\sigma_c$
	\end{enumerate}
\end{proposition}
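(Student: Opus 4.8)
The plan is to reduce all three assertions to the analysis of a single self-consistency map. Since \eqref{fp} is in divergence form, any stationary density must annihilate the flux, which integrates to the explicit Gibbs form
\begin{equation}
\rho_\mu(x)=\frac{1}{Z(\mu)}\exp\!\Big(-\tfrac{2}{\sigma^2}\big(\tfrac{x^4}{4}-\mu x\big)\Big),\qquad Z(\mu)=\int_{\mathbb R}\exp\!\Big(-\tfrac{2}{\sigma^2}\big(\tfrac{x^4}{4}-\mu x\big)\Big)\,dx,
\end{equation}
parametrised by the as-yet-undetermined mean $\mu=m_1$. Stationary solutions of \eqref{fp} are then in bijection with the fixed points of $G_\sigma(\mu):=\int x\,\rho_\mu(x)\,dx$, equivalently the zeros of $F_\sigma(\mu):=G_\sigma(\mu)-\mu$. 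Because the effective single-site potential $x^4/4$ is even, $\rho_0$ is symmetric with zero mean, so $\mu=0$ is always a fixed point; this gives part (1) at once.

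For parts (2) and (3) I would introduce the reduced free energy $\Phi(\mu)=\tfrac12\mu^2-\tfrac{\sigma^2}{2}\log Z(\mu)$, whose critical points are exactly the self-consistent $\mu$ (one checks $\Phi'(\mu)=\mu-G_\sigma(\mu)=-F_\sigma(\mu)$), and whose local minima correspond to the stable stationary measures of the McKean--Vlasov dynamics for quadratic interaction. Recognising $G_\sigma$ as the mean of an exponential family in the field $\lambda=2\mu/\sigma^2$ yields the fluctuation--dissipation identity $G_\sigma'(\mu)=\tfrac{2}{\sigma^2}\mathrm{Var}_{\rho_\mu}(x)$, so that $\Phi''(0)=1-\tfrac{2}{\sigma^2}\langle x^2\rangle_0$. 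The rescaling $x=\sqrt{\sigma}\,y$ (in the spirit of Lemma \ref{scalemv}, here with $\alpha=\tfrac12$) collapses the $\mu=0$ density to $\propto e^{-y^4/2}$ and gives $\langle x^2\rangle_0=\sigma\,C$ with $C=\sqrt2\,\Gamma(3/4)/\Gamma(1/4)$. Hence $G_\sigma'(0)=2C/\sigma$, and the threshold is pinned by $G_{\sigma_c}'(0)=1$, i.e. $\sigma_c=2C=2\sqrt2\,\Gamma(3/4)/\Gamma(1/4)\approx0.956$, matching the stated value.

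Part (3) then follows from the sign of $\Phi''(0)=1-2C/\sigma$: for $\sigma>\sigma_c$ the origin is a strict local minimum of $\Phi$ (the symmetric measure is stable), while for $\sigma<\sigma_c$ it is a local maximum, so the symmetric solution is unstable, which is exactly a pitchfork. Since $\Phi$ is even with $\log Z(\mu)=\mathcal O(\mu^{4/3})$ growing strictly slower than the $\tfrac12\mu^2$ term, $\Phi(\mu)\to+\infty$ as $|\mu|\to\infty$; combined with a local maximum at $0$ this forces at least two symmetric minima, hence at least three critical points once $\sigma<\sigma_c$.

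The main obstacle is the word \emph{exactly} in part (2): I must rule out further zeros of $F_\sigma$ on $(0,\infty)$. The clean route is to show $G_\sigma$ is concave on $[0,\infty)$, since then $F_\sigma=G_\sigma-\mu$ crosses zero at most once for $\mu>0$; by oddness this produces exactly one positive and one negative nontrivial solution, hence exactly three in all. Concavity amounts to $G_\sigma''(\mu)=(2/\sigma^2)^2\kappa_3(\rho_\mu)\le0$ for $\mu\ge0$, i.e. nonpositivity of the third cumulant under a nonnegative field. This is precisely a GHS-type inequality, available because the field-free single-site law $\propto\exp(-\tfrac{2}{\sigma^2}\tfrac{x^4}{4})$ is even and log-concave; I would either invoke the GHS inequality directly or verify $\kappa_3\le0$ by an integration-by-parts estimate exploiting convexity of $x^4/4$. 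Everything else is routine, and the sign of this third cumulant is the one genuinely nontrivial input.
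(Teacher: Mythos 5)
Your proposal is correct and follows essentially the same route as the sources \cite{dawson,shiino} on which the paper relies without reproof: reduction to the Gibbs-form self-consistency map $\mu\mapsto G_\sigma(\mu)$, the scaling $x=\sqrt{\sigma}\,y$ giving $\sigma_c=2\sqrt{2}\,\Gamma(3/4)/\Gamma(1/4)\approx0.956$, free-energy (H-theorem) characterisation of stability, and a GHS/concavity argument for exactness of the three solutions. The only point to tighten is your justification of GHS: evenness plus log-concavity of the field-free law is not the standard sufficient condition --- the Ellis--Monroe--Newman class requires $V$ even with $V'$ convex on $[0,\infty)$, which the quartic $V'(x)\propto x^3$ satisfies, so the concavity of $G_\sigma$ on $[0,\infty)$ and hence the count of exactly three zeros goes through.
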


Below $\sigma_c$, $\rho_0^0$ is unstable but there is a class of probability distributions that still converges to $\rho^0_{0}$, which is easy to characterise. 
 
\begin{proposition}[Symmetric Subspace of (\ref{sdep})]\label{symm}
    If $\rho(0)$ is symmetric then so is $\rho(t)$ (the solution to (\ref{fp}) with initial data $\rho(0)$). Further $\lim\limits_{t\rightarrow\infty}\rho(t)=\rho_{0}^0$
\end{proposition}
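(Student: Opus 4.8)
The plan is to establish the two assertions in turn: the first by a reflection-equivariance argument that is the Fokker--Planck analogue of Proposition~\ref{parity}, and the second by observing that symmetry collapses the nonlocal equation~(\ref{fp}) to a \emph{classical linear} Fokker--Planck equation whose relaxation to equilibrium is standard.

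For the invariance of the symmetric subspace, I would let $R$ denote the reflection acting on densities by $(R\rho)(x)=\rho(-x)$ and show that~(\ref{fp}) is equivariant under $R$: if $\rho(t,x)$ solves~(\ref{fp}) with self-consistent mean $\mu(t)=m_1[\rho(t)]$, then $\tilde\rho(t,x):=\rho(t,-x)$ solves~(\ref{fp}) with $m_1[\tilde\rho(t)]=-\mu(t)$. This is a one-line substitution of the kind carried out in Proposition~\ref{parity}: the cubic drift $-x^3$ is odd, the diffusion coefficient is even, and the nonlocal coefficient $\mu$ flips sign under $R$, so the two sign reversals cancel and $\tilde\rho$ obeys the same evolution. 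Since $\rho(0)$ is symmetric, $\tilde\rho(0)=\rho(0)$, whence $\rho$ and $\tilde\rho$ solve the same Cauchy problem; uniqueness of solutions to~(\ref{fp}) (part of the well-posedness hypotheses underlying~(\ref{sdep})) then forces $\rho(t,x)=\rho(t,-x)$ for all $t$.

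For convergence, symmetry of $\rho(t)$ forces $m_1(t)=\int x\,\rho(t,x)\,dx=0$ for every $t$, so the nonlocal term in~(\ref{fp}) vanishes identically and $\rho$ in fact solves the linear equation $\partial_t\rho=\partial_x[x^3\rho+\tfrac{\sigma^2}{2}\partial_x\rho]$, the Fokker--Planck equation of the decoupled diffusion $dX_t=-X_t^3\,dt+\sigma\,dW_t$. The quartic potential $x^4/4$ is confining in the sense assumed for ergodicity \cite{stroock}, so this linear equation admits the unique invariant density $\rho_\infty(x)=Z^{-1}\exp(-x^4/(2\sigma^2))$ and $\rho(t)\Rightarrow\rho_\infty$ as $t\to\infty$. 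Finally $\rho_\infty$ is even, hence has vanishing first moment and is therefore itself a stationary solution of the full nonlinear equation; being symmetric, it coincides with $\rho_0^0$.

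I expect the main obstacle to lie in the long-time convergence of the linearized problem rather than in the equivariance algebra. The potential $x^4/4$ is \emph{not} uniformly convex --- its second derivative $3x^2$ degenerates at the origin --- so Bakry--\'Emery does not apply directly, and one must instead secure a spectral gap (equivalently a Poincar\'e inequality for $\rho_\infty$) from the super-quadratic growth of the potential, e.g. by a Lyapunov/drift condition, before concluding convergence in relative entropy or in $L^2(\rho_\infty^{-1})$. A secondary point to pin down is the uniqueness invoked in the first part: it is available from the well-posedness of~(\ref{sdep}) under the standing confinement hypothesis, but should be cited explicitly.
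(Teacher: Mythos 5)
Your proposal is correct and follows essentially the same route as the paper: symmetry is propagated via the antisymmetry of the drift (reflection equivariance plus uniqueness), which forces $m_1\equiv 0$ and collapses~(\ref{fp}) to the linear Fokker--Planck equation of $dX_t=-X_t^3\,dt+\sigma\,dB_t$, whose convergence to its unique invariant density identifies the limit with $\rho_0^0$. The paper states this in two lines, while you additionally make explicit the uniqueness needed for the equivariance step and the spectral-gap mechanism (Lyapunov/Poincar\'e rather than Bakry--\'Emery) behind the long-time convergence --- details the paper leaves implicit.
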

\begin{proof} Symmetry follows from antisymmetry of potential. Therefore $m_1\equiv0$ and solutions to (\ref{fp}) are the same as for the Fokker Planck equation associated to process $-X_t^3+\sigma dB_t$, which converges to $\rho_{0}^0$.
\end{proof}

\begin{figure}[ht]
	\centering
	\includegraphics[width=\textwidth]{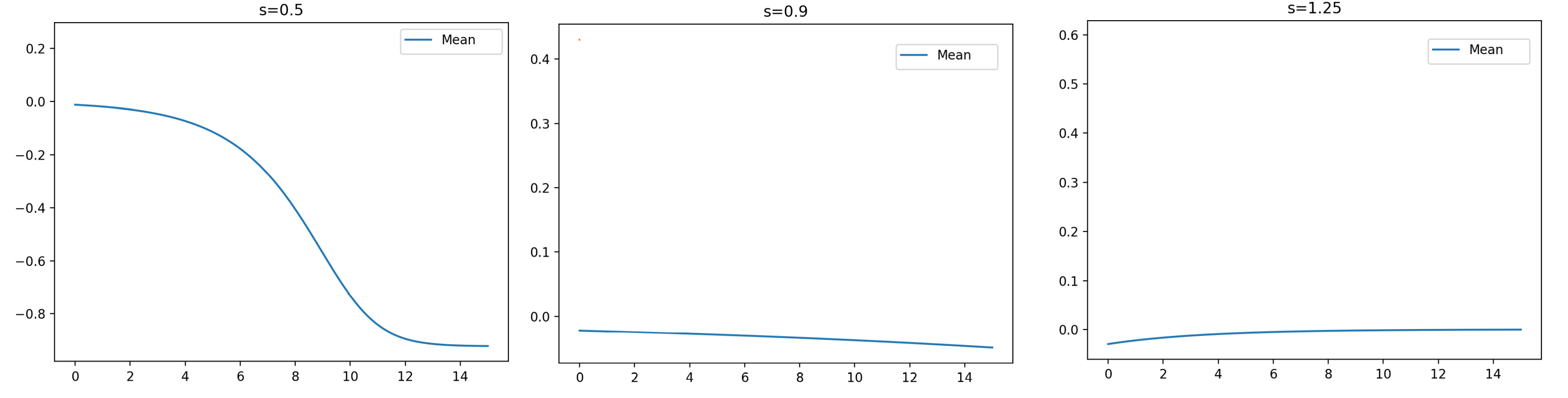}
	\caption{Gauss Galerkin Moment Closure for (\ref{sdep}):  Initialised with $\mathcal{N}(-0.1,0.1)$, $n=8$ (and changing $y$-axis scale). The equilibrium $m_1$ is approaching zero and there is a change in stability between $\sigma=0.9$ and $1.25$. The critical temperature $\sigma_c$ was determined to be $\sim0.97$ }
\end{figure}

The simple symmetric bistable potential with quadratic (Currie-Weiss) interaction is the simplest MV-SDE with well-studied critical behaviour, making it a good test case for any truncation scheme.

What makes GG-QMoM particularly appealing as a truncation scheme for this example is the guaranteed existence of a symmetric solution with, by Proposition \ref{scalemv}, an explicit representation, allowing us to directly probe its stability. 

We briefly collate the properties of stationary solutions of a GG-QMoM truncation of SDE (\ref{sdep}) in light of our results.

With the estimates proven in Appendix \ref{ape}, we can apply Theorem \ref{convergencethm} and conclude the solution of (\ref{x-eq}, \ref{b-eq}) converges to the law of (\ref{sdep}).

As $\sigma\downarrow 0$ $\tilde\rho_{n,0}$ converges to point masses centered at $\pm 1$ and $0$, according with Proposition \ref{mv-sde}.
For all $\sigma$, there exists a symmetric stationary solution to (\ref{x-eq},\ref{b-eq}), converging to a point mass centered at 0 as $\sigma\downarrow 0$

With Proposition \ref{parity} and Theorem \ref{convergencethm} we know, analogously to Proposition \ref{symm}, any symmetric initial distribution will converge to $\tilde\rho_0^0$.

From Lemma \ref{stab1} we know $\tilde\rho_0^0$ will be unstable with respect to a unidirectional perturbation, for sufficiently small $\sigma$. For this example, and a simple translation $\frac{dx_i}{dh}=1$, (\ref{sum}) is
$$
\frac{d}{dh}\sum_i\beta_i(x_i-x^3_i)=\sum_i\beta_i(1-3x^2_i)
$$
By Lemma \ref{scalemv}, this is equivalent to
$$
\sum_i\beta_i(1-3\sigma^2\tilde{x}_i^2)=1-3\sigma^2m_2^{\sigma=1}
$$
where $\tilde x_i$ are the abscissae of, and $m_2$ the second moment of the $n$-approximant when $\sigma=1$. The system is necessarily unstable when 
\begin{equation}\label{rear}
\sigma<\sqrt[2]{\frac{1}{3m^{\sigma=1}_2}}
\end{equation}
As $N$ increases, this tends to the second moment of the law of the SDE. Then (\ref{rear}) $\sim 0.993$ which is in good accord with Proposition \ref{summ} (\textit{iii}) and numerical experimentation, see Figures 1 and 2.

\begin{figure}[ht]
	\centering
	\includegraphics[width=\textwidth]{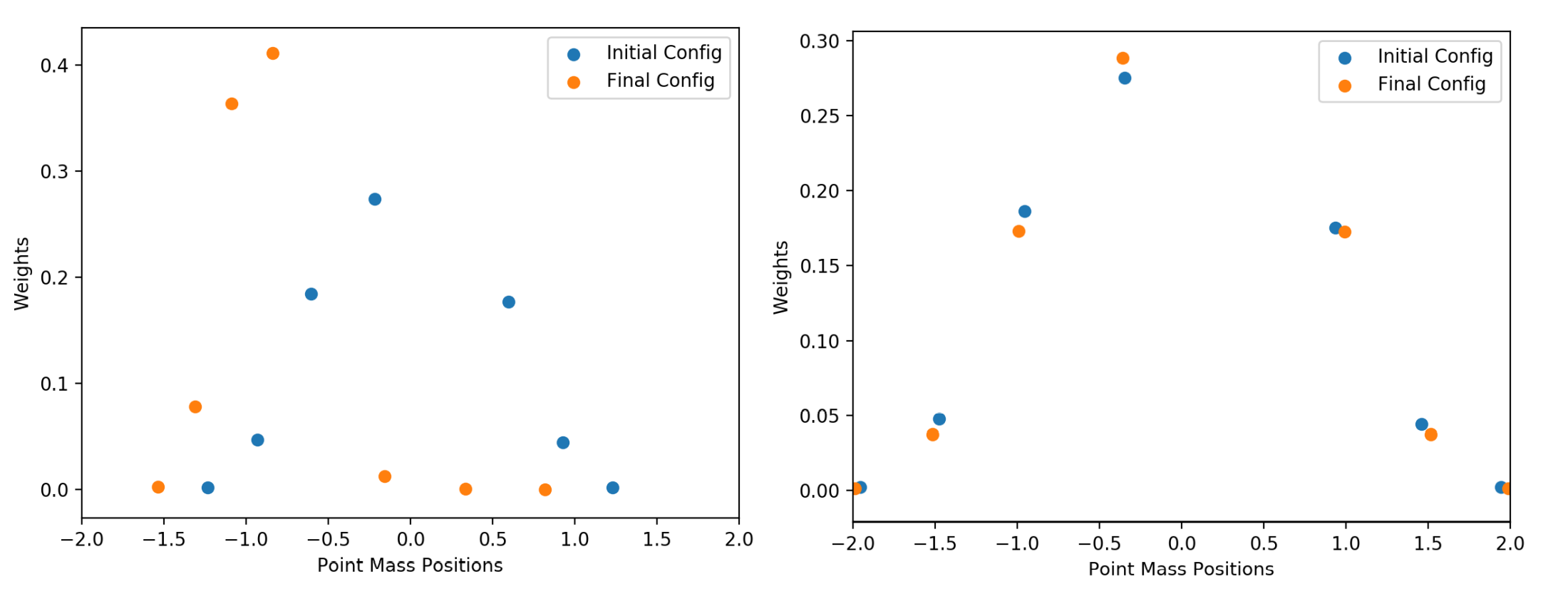}
	\caption{Gauss Galerkin Moment Closure for (\ref{sdep}):  Initialised with $\exp{(-2(x^4-0.1x))}$ $n=8$ $\sigma=0.5$ (left), $\sigma=1.25$ (right)}
\end{figure}

\appendix
\section{Langrange Interpolating Polynomials}
\begin{defn}Given a set of nodes $\{x_j:0\le i\le N,\, x_i\ne x_j\}$ 
    the $i^{th}$ Lagrange polynomial is a polynomial of $\deg(l_i)=N$ such that $l_i(x_j)=\delta_{ij}$\end{defn}
    The explicit form of this polynomial is
    $$l_i(x)=\prod_{k\ne i} \frac{x-x_k}{x_i-x_k}$$
    with first derivative
    $$l^{'}_i(x)=\sum_{k\ne i}\frac{1}{x_i-x_k}\prod_{k\ne (i,k)} \frac{x-x_m}{x_k-x_m}$$
    
    For the second, we only require
    $$l_i^{''}(x_i)=l^{''}_{ii}=(\sum_{k\ne i}\frac{1}{x_i-x_k})^2-\sum_{k\ne i}\frac{1}{(x_i-x_k)^2}
    $$
     
    Alternatively, Szeg\H o \cite{Szego} provides a conjunct representation via the base polynomial $R(x)=\prod_k(x-x_k)$. Then 
    $$l_i(x)=\frac{R(x)}{R^{'}(x_i)(x-x_i)}$$
    
    Here, we only need the derivative of the Lagrange polynomials at their nodes which is easy to express in terms of $R^{'}(x_j)=R^{'}_j$
    
    \begin{equation}
    \label{h}l^{'}_{ij}=\left\{ \begin{array}{rcl}
    \sum_{k\ne i}\frac{1}{x_i-x_k},&\quad i=j\\
    \frac{R^{'}_j}{R^{'}_i}\frac{1}{x_j-x_i},& \quad \mathrm{otherwise}
    
    \end{array}\right.
    \end{equation}

\section{A Priori Estimates for MV-SDE (\ref{sdep})}\label{ape}

We head this section by introducing the subset of probability measures with factorial-summable moments
\begin{defn}\label{pms}
	$\mathcal{P}_M(\mathbb{R})$ is a subset of probability measures	such that 
    \[\sum_{n=1}^\infty\frac{m_n}{n!}<\infty\] 
    (the moments are $\mathrm{`factorial}\, \mathrm{summable'}$) where $m_n=\int_x x^n\rho(dx)$ \\
\end{defn}
A sufficient condition for $\rho\in\mathcal{P}_M(\mathbb{R})$ is that the even moments are factorial-summable:
\begin{lemma}\label{norm}
    If $\sum_k \frac{m_{2k}}{2k!}<\infty$ then  $\sum_k \frac{|m_{k}|}{k!}<c\sum_k \frac{m_{2k}}{2k!}<\infty$.
\end{lemma}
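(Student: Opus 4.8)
The plan is to reduce everything to the even moments via the trivial bound $|m_k| \leq \mu_k := \int |x|^k \rho(\mathrm{d}x)$ and then to control the odd absolute moments $\mu_{2j+1}$ by their two even neighbours using Cauchy--Schwarz. Since $\mu_{2j} = m_{2j}$ at the even indices, the even part of $\sum_k |m_k|/k!$ is already exactly $\sum_j m_{2j}/(2j)!$, which is finite by hypothesis; so the entire task is to show the odd part is dominated by a constant multiple of that same series.

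First I would write $|x|^{2j+1} = |x|^{j}\,|x|^{j+1}$ and apply Cauchy--Schwarz in $L^2(\rho)$ to obtain
\begin{equation}\label{cauchyschwarzmom}
\mu_{2j+1} \leq \Big(\int |x|^{2j}\rho(\mathrm{d}x)\Big)^{1/2}\Big(\int |x|^{2j+2}\rho(\mathrm{d}x)\Big)^{1/2} = \sqrt{m_{2j}\,m_{2j+2}}.
\end{equation}
Dividing by $(2j+1)!$ and reintroducing the even factorials, the point is the elementary identity
\begin{equation}\label{factratio}
\frac{(2j)!\,(2j+2)!}{\big((2j+1)!\big)^2} = \frac{2j+2}{2j+1} \leq 2, \qquad j\geq 0,
\end{equation}
which lets me rewrite the bound \eqref{cauchyschwarzmom} as
\begin{equation}\label{geomean}
\frac{\mu_{2j+1}}{(2j+1)!} \leq \sqrt{2}\,\sqrt{\frac{m_{2j}}{(2j)!}\cdot\frac{m_{2j+2}}{(2j+2)!}}.
\end{equation}

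Writing $a_j := m_{2j}/(2j)!$ (a nonnegative, summable sequence by assumption), the right-hand side of \eqref{geomean} is $\sqrt{2}\,\sqrt{a_j a_{j+1}}$. I would then apply AM--GM, $\sqrt{a_j a_{j+1}} \leq \tfrac12(a_j + a_{j+1})$, and reindex, using $\sum_j (a_j + a_{j+1}) \leq 2\sum_j a_j$, so that the odd part is at most $\sqrt{2}\sum_j a_j$. Combining with the even part gives $\sum_k |m_k|/k! \leq (1+\sqrt{2})\sum_j m_{2j}/(2j)!$, i.e.\ the claim with $c = 1+\sqrt{2}$.

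The only genuinely delicate point is the factorial bookkeeping \eqref{factratio}: a direct AM--GM applied to \eqref{cauchyschwarzmom} \emph{before} rebalancing the factorials would leave a term $m_{2j+2}/(2j+1)! = (2j+2)\,m_{2j+2}/(2j+2)!$, whose growing prefactor $2j+2$ cannot be absorbed into the target series. Splitting the factorial deficit symmetrically between the two even neighbours, as in \eqref{geomean}, is exactly what removes this obstruction; everything else is routine.
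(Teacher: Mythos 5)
Your proof is correct and takes essentially the same route as the paper: the paper's one-line ``Cauchy inequality'' $|x|^{2k-1}\le \frac{x^{2k}}{4k}+k\,x^{2k-2}$, integrated against $\rho$ and divided by $(2k-1)!$, is precisely your Cauchy--Schwarz bound $\mu_{2k-1}\le\sqrt{m_{2k-2}\,m_{2k}}$ followed by AM--GM once the factorial deficit has been split between the two even neighbours. Both arguments dominate each odd term by the adjacent even terms of the target series, yielding the claim with a small explicit constant ($c\ge 3$ in the paper, $c=1+\sqrt{2}$ in yours).
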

\begin{proof} This follows from the Cauchy inequality
    $$
    |x|^{2k-1}=\frac{x^{2k}}{4k}+kx^{2k-2}
    $$
    Integrating both sides of the equality with respect to the measure, dividing by $ (2k-1)!$ and simplifying, we see $\sum\frac{|m|_{2k-1}}{(2k-1)!}\leq2\sum_k \frac{m_{2k}}{2k!}$. The result follows with $c\geq 3$. 
\end{proof}

%We include the calculations that show that the Gauss-Galerkin moment closure solution to (\ref{sdep}) satisfies the hypotheses  from Theorem \ref{convergencethm}. For this, we need the lemmata from the previous section. 
%Let us recall $\mathcal{P}_M(\mathbb{R})$ from Definition \ref{pms}. From Lemma \ref{norm}, to prove $\mu\in\mathcal{P}_M(\mathbb{R})$ it suffices to show $\sum\frac{m_{2n}}{2n!}$. Using this strategy, we are now ready to show the law of process (\ref{sdep}) is always in $\mathcal{P}_M(\mathbb{R})$, provided the initial measure is.
\begin{proposition}\label{measbou}
	Given an initial distribution $\rho_0\in\mathcal{P}_M(\mathbb{R})$, then the law of process (\ref{sdep}) $\rho(t)\in\mathcal{P}_M(\mathbb{R})$ too.
\end{proposition}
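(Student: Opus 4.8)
The plan is to reduce everything to the control of even moments and to recognise the resulting series as an exponential observable. By Lemma~\ref{norm} it is enough to show $\sum_k m_{2k}(t)/(2k)!<\infty$, and since this series is precisely $\mathbb{E}[\cosh X_t]=\sum_{k\ge0}\mathbb{E}[X_t^{2k}]/(2k)!$, the proposition is equivalent to the statement that $u(t):=\mathbb{E}[\cosh X_t]$ is finite, and (for later use in Theorem~\ref{convergencethm}) locally bounded in $t$, whenever $u(0)<\infty$. First I would record the two elementary pointwise inequalities $|x|\le\cosh x$ and $x^3\sinh x\ge0$, which encode that the cubic drift of \eqref{sdep} is dissipative against the test function $\cosh$.

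Next I would obtain an a priori bound on the low moments, which is needed because the drift contains the unknown term $m_1(t)=\mathbb{E}[X_t]$. From the second moment equation $\dot m_2=-2m_4+2m_1^2+\sigma^2$ together with $m_4\ge m_2^2$ and $m_1^2\le m_2$ one gets $\dot m_2\le-2m_2^2+2m_2+\sigma^2$, whose right-hand side is negative for large $m_2$; hence $m_2(t)\le\max\{m_2(0),\,r_\sigma\}$ for all $t$, where $r_\sigma$ is the positive root of $-2r^2+2r+\sigma^2$. In particular $|m_1(t)|\le m_2(t)^{1/2}$ is bounded by a constant $K=K(\sigma,m_2(0))$ uniformly in $t$.

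Then I would apply It\^o's lemma to $\cosh X_t$ and take expectations. With $\phi'=\sinh$, $\phi''=\cosh$, and discarding the martingale term, this yields the differential identity
\[
u'(t)=\mathbb{E}[-X_t^3\sinh X_t]+m_1(t)\,\mathbb{E}[\sinh X_t]+\tfrac{\sigma^2}{2}\,\mathbb{E}[\cosh X_t].
\]
The first term is $\le0$, while $|m_1\sinh x|\le K\cosh x$, so the integrand is dominated pointwise by $-x^3\sinh x+(K+\tfrac{\sigma^2}{2})\cosh x$. Because $x^3\tanh x\to\infty$, there is $M$ with $x^3\sinh x\ge(K+\tfrac{\sigma^2}{2})\cosh x$ for $|x|\ge M$, so this expression is $\le0$ outside $[-M,M]$ and bounded by some $C=C(K,\sigma)$ inside; hence $u'(t)\le C$ and $u(t)\le u(0)+Ct<\infty$, uniformly on any $[0,T]$. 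Setting $k_{2k}=(2k)!\sup_{t\le T}u(t)$ then reproduces the hypotheses of Theorem~\ref{convergencethm} with $\theta_0=1$.

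The main obstacle is rigour rather than the estimate itself: both the $m_2$ bound and the passage from It\^o's formula to the inequality for $u$ presuppose the integrability we are trying to establish, so the argument is superficially circular. I would resolve this by localising with the stopping times $\tau_n=\inf\{t:|X_t|\ge n\}$, deriving $\mathbb{E}[\cosh X_{t\wedge\tau_n}]\le u(0)+Ct$ with $C$ independent of $n$ (the pointwise drift bound being valid on the whole line), and then letting $n\to\infty$ via Fatou's lemma; the same localisation legitimises the a priori $m_2$ estimate. A minor point to verify is that $K$ is genuinely $t$-uniform, so that $C$ is a single constant.
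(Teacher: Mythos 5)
Your proposal is correct, but it takes a genuinely different route from the paper's. You sum the even-moment series first, recognising $\sum_{k}m_{2k}(t)/(2k)!=\mathbb{E}[\cosh X_t]$, and then run a single Lyapunov-functional estimate: It\^o applied to $\cosh$, dissipativity of the cubic drift ($-x^3\sinh x\le 0$), a separate a priori bound on $m_2$ (hence on $|m_1|$) to tame the nonlocal term, and a stopping-time/Fatou localisation to make the differential inequality honest. The paper instead works moment-by-moment: it closes each even-moment equation into a scalar differential inequality via the power-mean inequality of Lemma \ref{vil} (bounding $m_{2k\pm2}$ by $m_{2k}^{1\pm 1/k}$ and the $m_1m_{2k-1}$ term by $m_{2k}$), deduces $m_{2k}(t)\le\max(m_{2k}^0,m_{2k}^*)$ where $m_{2k}^*$ is the positive equilibrium root obtained from the quadratic in $y=m_{2k}^{1/k}$, with $(m_{2k}^*)^{1/k}\sim\sigma$, and only then sums, invoking Lemma \ref{norm}. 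What each buys: the paper's route gives bounds that are uniform in $t$ and explicit in $\sigma$ for every individual moment, which is convenient for the asymptotic statements elsewhere in the paper; your route gives only the linearly growing bound $u(t)\le u(0)+Ct$, but that is exactly what the hypotheses of Theorem \ref{convergencethm} require on a compact interval $[0,T]$, and it still yields $\theta_0=1$ (take $k_n\le c\,n!\,(u(0)+CT)$ via Lemma \ref{norm}). Moreover, your version squarely addresses a point the paper passes over silently: the paper manipulates the MEEs as if all moments were a priori finite and differentiable in $t$, whereas your localisation legitimises this. One residual item to nail down: in the localised $m_2$ estimate the coefficient $m_1(s)$ is the mean of the actual (unstopped) law of the McKean--Vlasov process, so its a priori finiteness and continuity must come from the standing well-posedness assumption (solutions of (\ref{mvp}) are constructed with finite second moments), or from closing a self-consistent inequality such as $2m_1X_s\le m_2(s)+X_s^2$ with Gr\"onwall; this is a one-line fix and does not undermine the argument.
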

\begin{proof}
    This needs two auxiliary results: lemma \ref{norm} and
    \begin{lemma}[Lyapunov]\label{vil}
        If $0<s<t$, then for random variable $X$ $$\mathbb{E}(|X|^s)^\frac{1}{s}\leq \mathbb{E}(|X|^t)^\frac{1}{t}$$
    \end{lemma}
    
    \begin{proof} Apply Jensen's inequality with $g(x)=|x|^{\frac{t}{s}}$\end{proof}

   Lemma (\ref{vil}) lets us majorise the equation as  \begin{equation}\label{bound}\dot{m}_{2k}\leq2k\big(\frac{\sigma^2}{2}m_{2k}^{1-\frac{1}{k}}+(1-\theta)m_{2k}-m_{2k}^{1+\frac{1}{k}}\big)\end{equation}
	Equation (\ref{bound}) has only one positive root, $m_{2k}^*$ from positive to negative. Therefore if, for any $t\geq0$ or $k$, $m_{2k}(t)>m_{2k}^*$,  $m_{2k}$ will decrease. %Therefore, for any $\epsilon>0$, there exists a $T$ such that $t>T$, $m_{2k}(t)\leq m_{2k}^*+\epsilon$
	
	%To satisfy the hypotheses of Theorem \ref{convergencethm} we must show $m_{2k}^*$ are themselves summable. 
    
    The roots $m_{2k}^*$ can be determined by solving the concomitant quadratic equation 
    $$\frac{\sigma^2}{2}+(1-\theta)y-y^2=0$$
    where $m_2k^{\frac{1}{k}}=y$. The solution $\sim \sigma$, so $\sum_k\frac{m_{2k}}{2k!} \sim \frac{\sigma^k}{k!}$ which is summable by comparison to the exponential power series. 
    %. Dividing through by $m_{2k}^{1-\frac{1}{k}}$ and constants, we see the positive root of this expression are related to the roots of the quadratic equation 
	%by $m_{2k}^*=y^{*k}$, where $y^*$ is a root of the quadratic equation.
	
	%Applying the quadratic formula, the roots increase with $k$ like $O\big((2k-1)^{\frac{k}{2}}\big)$. By the ratio test, $\frac{(2k-1)^{\frac{k}{2}}}{2k!}$ is absolutely summable. 

    Label the $k^{th}$ moments of $\rho_0$ as $m^0_k$. Each $m_{2k}\leq\max(m^0_{2k},m^*_{2k}),\,\forall t$. As the combination of two summable series, this upper bound is summable and, on invoking lemma \ref{norm}, we have a summable upper bound that holds $\forall t$.
\end{proof}

In fact this is sufficient to satisfy the hypotheses of Theorem \ref{convergencethm} with $\theta_0=1$.

%\printbibliography

\bibliographystyle{abbrv}
\bibliography{ammends4} 
\end{document}